\newcommand{\breakingcomma}{%
  \begingroup\lccode`~=`,
  \lowercase{\endgroup\expandafter\def\expandafter~\expandafter{~\penalty0 }}}
\titleformat{\section}{\large\bfseries}{\thesection}{.5em}{}
\newtheorem{theorem}{Theorem}[section]
\newtheorem{lemma}[theorem]{Lemma}
\newtheorem{remark}[theorem]{Remark}
\newtheorem{proposition}[theorem]{Proposition}
\newtheorem{corollary}[theorem]{Corollary}
\newtheorem{definition}[theorem]{Definition}
\newtheorem{example}[theorem]{Example}
\newcommand\be{\begin{equation}}
\newcommand\ee{\end{equation}}
\newcommand\bn{\begin{eqnarray}}
\newcommand\en{\end{eqnarray}}
\newcommand\bns{\begin{eqnarray*}}
\newcommand\ens{\end{eqnarray*}}
\newcommand\bd{\begin{definition}}
\newcommand\ed{\end{definition}}
\newcommand\br{\begin{remark}}
\newcommand\er{\end{remark}}
\newcommand\bt{\begin{theorem}}
\newcommand\et{\end{theorem}}
\newcommand\bp{\begin{proposition}}
\newcommand\ep{\end{proposition}}
\newcommand\bc{\begin{corollary}}
\newcommand\ec{\end{corollary}}
\newcommand\bl{\begin{lemma}}
\newcommand\el{\end{lemma}}
\newcommand\bR{{\mathbb R}}
\newcommand\bN{{\mathbb N}}
\newcommand\bZ{{\mathbb Z}}
\newcommand\cR{{\cal R}}
\newcommand{\N}{\mbox{$\mathbb{N}$}}
\newcommand{\NS}{\mbox{\scriptsize${\mathbb{N}}$}}
\newcommand{\F}{\mbox{$\mathcal{F}$}}
\begin{document}

\title{The Multiple Riordan Group and the Multiple Riordan Type Arrrays}

\author{Tian-Xiao He 
\\
{\small Department of Mathematics}\\
 {\small Illinois Wesleyan University}\\
 {\small Bloomington, IL 61702-2900, USA}\\
}

\date{}

\maketitle
\setcounter{page}{1}
\pagestyle{myheadings}
\markboth{T. X. He }
{The Multiple Riordan Group and the Multiple Riordan Type Arrays}

\begin{abstract}
\noindent 
This is the first paper of a sequence papers on the multiple Riordan group and the multiple Riordon type arrays. We give a comprehensive discussion of the multiple Riordan arrays and characterize them by an $A$-sequence and multiple $Z$-sequences. The multiple Riordan group and some of its subgroups are defined. In addition, we give compressions of multiple Riordan arrays and their sequence characterizations. The total positivity of the compressions of multiple Riordan arrays is studied.

\vskip .2in
\noindent
AMS Subject Classification: 05A15, 05A05, 11B39, 11B73, 15B36, 15A06, 05A19, 11B83.

\vskip .2in
\noindent
{\bf Key Words and Phrases:} multiple Riordan arrays, the multiple Riordan group, generating function, sequence characterization, production matrix, compressions of multiple Riordan arrays, and total positivity.

\end{abstract}

\section{Introduction}

Riordan arrays are infinite, lower triangular matrices defined by the generating function of their columns. They form a group, denoted by $\mathcal{R}$ and called {\em the Riordan group} (see Shapiro, Getu, W. J. Woan and L. Woodson \cite{SGWW}). 

More precisely, let us consider the set of all formal power series (f.p.s.) in $t$, $\F = {\mathbb K}[\![$$t$$]\!]$, with a field ${\mathbb K}$ of characteristic $0$ (e.g., ${\mathbb Q}$, ${\mathbb R}$, ${\mathbb C}$, etc.). The \emph{order} of $f(t)  \in \F$, $f(t) =\sum_{k=0}^\infty f_kt^k$ ($f_k\in {\bR}$), is the minimal number $r\in\N$ such that $f_r \neq 0$. Denote by $\F_r$ the set of formal power series of order $r$. Let $g(t) \in \F_0$ and $f(t) \in \F_1$. 

Similarly, we may consider the set of all formal power series (f.p.s.) in $t^\ell$, $\ell \geq 2$,$\F^{(\ell)} = {\mathbb K}[\![$$t^\ell$$]\!]$, with a field ${\mathbb K}$ of characteristic $0$ (e.g., ${\mathbb Q}$, ${\mathbb R}$, ${\mathbb C}$, etc.). The \emph{order} of $f(t)  \in \F^{(\ell)}$, $f(t) =\sum_{k=0}^\infty f_kt^{\ell k}$ ($f_k\in {\bR}$), is the minimal number $r\in\N$ such that $f_r \neq 0$. Denote by $\F^{(\ell)}_r$ the set of formal power series of order $r$. 

Let $g(t) \in \F_0$ and $f(t) \in \F_1$. Then, the pair $(g(t) ,\,f(t) )$ defines the {\em (proper) Riordan array} $D=(d_{n,k})_{0\leq k\leq n}=(g(t), f(t))$ having
  
\begin{equation}\label{Radef}
d_{n,k} = [t^n]g(t) f(t) ^k
\end{equation}
or, in other words, having $g(t) f(t)^k$ as the generating function whose coefficients make-up the entries of column $k$. 

From the {\it fundamental theorem of Riordan arrays} (see \cite{Sha1}), it is immediate to show that the usual row-by-column product of two Riordan arrays is also a Riordan array:
\begin{equation}\label{Proddef}
    (g_1(t) ,\,f_1(t) )  (g_2(t) ,\,f_2(t) ) = (g_1(t) g_2(f_1(t) ),\,f_2(f_1(t) )).
\end{equation}
The Riordan array $I = (1,\,t)$ acts as an identity for this product. Thus, the set of all Riordan arrays forms the Riordan group $\mathcal{R}$.

In the next section, we define Riordan type arrays and extend them to multiple Riordan type arrays, by using multiple Riordan arrays. The collection of all multiple Riordan arrays form a group called the multiple Riordan group. In Section $3$, we study some properties of Riordan type arrays such that the $A$-sequences and the $Z$-sequences of Riordan type arrays. The sequence characterization of multiple Riordan arrays and multiple Riordan type arrays are presented in Section $4$. We discuss the construction of identities by using Riordan type arrays in Section $5$. Finally, we give and study compressions of multiple Riordan arrays and multiple Riordan type arrays in Section $6$. 

\section{Riordan type arrays, multiple Riordan arrays, and the multiple Riordan group}

We now consider Riordan type arrays or square version of Riordan array $(d_{n,k})_{n,k\in \NS}=(g,f)$ with $g,f\in \F_0$, where $g(0)\not= 0$, but $f(0)$ may or may not be zero, and $d_{n,k}$ is defined by $d_{n,k}=[t^n]gf^k$ for all $n,k\in {\mathbb N}$. 
The collection of all Riordan type arrays is denoted by ${\bR}_s$. The motivation to study Riordan type arrays can be found in the construction of identities (cf. Section $5$) and lattice paths with a step $(0,1)$ (cf. the second paper \cite{He25}).

In construction of a Riordan array, one multiplier function h is used to multiply one
column to obtain the next column. Suppose alternating rules are applied to generate an infinite matrix similar to a Riordan array. To consider this case, one may use $\ell$, $\ell \geq 2$ multiplier functions, denoted by $f_1$, $f_2\ldots,$ and $f_\ell$, respectively.  

Let $g\in {\mathbb K}[[t^\ell]]$ with $g(t)=\sum_{k\geq 0} g_{\ell k}t^{\ell k}$ and $f\in t{\mathbb K}[[t^\ell]]$ with $f_j(t)=\sum_{k\geq 0} f_{j, \ell k+1}$ $t^{\ell k+1}$, $j=1,2,\ldots, \ell$. Then the multiple Riordan matrix in terms of  $g(t)$, $f_i(t)$, $i=1,\ldots, \ell$, denoted by $(g; f_{1}, f_{2}, \ldots, f_\ell)$, is defined by the generating function of its columns as 

\[
(d_{n,k})_{n,k\geq 0}=(g, gf_{1}, gf_{1}f_{2}, \ldots, gf_1f_2\cdots f_\ell, gf_{1}^{2}f_{2}\cdots f_\ell, 
\ldots),
\]
where 

\[
d_{n,k}=[t^n]g f_1^{\lfloor \frac{k+\ell -1}{\ell}\rfloor}f_2^{\lfloor \frac{k+\ell -2}{\ell}\rfloor}\cdots f_\ell^{\lfloor \frac{k}{\ell}\rfloor}.
\]

Here, we use $\ell$ cases of the {\it first fundamental theorem of multiple Riordan type arrays}:

\be\label{1.6}
(g; f_{1}, f_{2},\ldots, f_\ell)A_j(t)=B_j(t),
\ee
where for $A_j(t)=\sum_{k\geq 0}a_{\ell k+j}t^{\ell k+j}$, $j=0,1,\ldots, \ell-1$,
we have $B_j(t)=g(f_j/h)A(h)$, $j=0-,1,\ldots, \ell-1$, where $h=\sqrt[\ell]{f_{1}\cdots f_{\ell}}$. Based on the fundamental theorem of multiple Riordan type arrays, we may define a multiplication of two multiple Riordan type arrays as 

\begin{align}\label{1.7}
&(g; f_{1},f_{2}, \ldots f_\ell)(d; h_{1},h_{2},\ldots, h_\ell)\nonumber\\
=&\left(g d(h); \frac{f_1}{h}h_1(h), 
\frac{f_2}{h}h_2(h),\ldots, \frac{f_\ell }{h}h_\ell (h)\right),
\end{align}
where $d(t)$ = $\sum _{k=0}^{\infty} d_{\ell k}t^{\ell k}$, $h_{j}(t)$ = $\sum_{k=0}^{\infty} h_{j,\ell k} t^{\ell k +1}$, $j=1,2,\ldots, \ell$.

Hence, the multiple Riordan array $(g; f_1,\ldots, f_\ell)$ is corresponding to the multiple Riordan type array $(g; f_1/t, \ldots, f_\ell/t)$. 

The collection of all multiple Riordan arrays forms the multiple Riordan group under the multiplication defined by \eqref{1.7}, in which $d(t)$ = $\sum _{k=0}^{\infty} d_{\ell k}t^{\ell k}$, $h_{j}(t)$ = $\sum_{k=0}^{\infty} h_{j,\ell k+1} t^{\ell k+1}$, $j=1,2,\ldots, \ell$. The collection of all multiple Riordan arrays forms a group called the multiple Riordan group and denoted by ${\cal M}\cal {R}$. Clearly, the identity of ${\cal M}{\cal R}$ is $(1;t,\ldots, t)$, and the inverse of $(g; f_1,\ldots, f_\ell)$ is 

\be\label{1.8}
(g; f_1,\ldots, f_\ell)^{-1}=\left(\frac{1}{g(\bar h)}; \frac{t\bar h}{f_1(\bar h)}, \frac{t\bar h}{f_2(\bar h)}, \ldots, \frac{t\bar h}{f_\ell(\bar h)}\right),
\ee
where $\bar h$ is the compositional inverse of $h=\sqrt[\ell]{f_1f_2\cdots f_\ell}.$

Similarly, the bivariate generating function of the multiple Riordan group element $(g; f_1, f_2, \ldots f_\ell)$ is given by 

\be\label{1.6-2-2}
\frac{g(1 + yf_1 + \cdots +y^{\ell-1}f_1f_2\cdots f_{\ell-1})}{1-y^\ell f_1f_2\cdots f_\ell}.
\ee
In particular, the row sums and the diagonal sums of the associated matrix have generating
functions

\be\label{1.6-3-2}
\frac{g(1 + f_1 + \cdots +f_1f_2\cdots f_{\ell-1})}{1-f_1f_2\cdots f_\ell}
\ee
and

\be\label{1.6-4}
\frac{g(1 + xf_1 + x2f_1f_2 +\cdots +x^{\ell -1}f_1f_2\cdots f_\ell)}{1-x^\ell f_1f_2\cdots f_\ell},
\ee
respectively.

The study of the case $\ell=2$ for $\ell$-multiple Riordan arrays, called double Riordan arrays, is started from Davenport, Shapiro, and Woodson \cite{DSW12}, followed by the author \cite{He18}, Branch, Davenport, Frankson, Jones, and Thorpe \cite{BDFJT}, Davenport, Frankson, Shapiro, and Woodson \cite{DFSW}, Sun and Sun \cite{SS23}, and Zhang and Zhao \cite{ZZ}, etc. and their references. The cases of $\ell=3$ and $4$ are studied in Barry \cite{Bar24} in a different view.  

\begin{proposition}\label{pro:2.3}
We can identify some subgroups of the multiple Riordan group ${\cal M}{\cal R}$:

\begin{itemize} 
\item the set $\mathcal{A}$ of {\em Appell arrays} is the collection of all multiple Riordan arrays $\{(g; t\ldots, t):g\in \F^{(\ell)}_0\}$ in ${\cal M}{\cR}$, which is a subgroup and called the Appell subgroup of ${\cal M}{\cR}$; 

\item the set $\mathcal{L}$ of {\em Lagrange arrays} is the collection of all multiple Riordan arrays $\{ (1; \,f_1,\ldots, f_\ell):f\in \F^{(\ell)}_1\}$ in ${\cal M}{\cR}$, which is a subgroup and called the Lagrange subgroup of ${\cal M}{\cR}$;

\item the set $\mathcal{D}$ of {\it derivative arrays} is the collection of all multiple Riordan arrays $\{ \left(h'(t), f_1,\ldots, f_\ell \right): f_i\in \F^{(\ell)}_1, i=1,\ldots, \ell\}$ with $h=\sqrt[\ell]{f_1\cdots f_\ell}$  in ${\mathcal M}{\cR}$, which is a subgroup and called the derivative subgroup of ${\mathcal M}{\cal M}{\cR}$.

\item the set $\mathcal{B}_j$ of {\it $j$-th Bell arrays} is the collection of all multiple Riordan arrays $\{ (g; f_1,\ldots, f_\ell): g\in \F^{(\ell)}_0, f_i\in \F^{(k)}_1, i=1,\ldots, \ell\,\, and \,\, f_j=tg\}$ in ${\mathcal M}{\cR}$, which is a subgroup and called the $j$-th Bell subgroup of ${\mathcal M}{\cR}$.
\end{itemize} 
\end{proposition}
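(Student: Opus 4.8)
The plan is to verify, for each of the four families, the two defining conditions of a subgroup: closure under the group multiplication \eqref{1.7}, and closure under inversion \eqref{1.8} (nonemptiness is clear since each family contains the identity $(1;t,\dots,t)$). Throughout, the key computational facts I will lean on are: if $f_1=\cdots=f_\ell=t$ then $h=\sqrt[\ell]{f_1\cdots f_\ell}=t$, and more generally $h$ is the "geometric mean" series that enters both the product and inverse formulas; I will also use that composition with $h$ is a group action compatible with the $\ell$-fold grading (so that the component series remain in $\F^{(\ell)}_0$ or $t\F^{(\ell)}$ as required).

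For the Appell subgroup $\mathcal{A}=\{(g;t,\dots,t)\}$: setting all $f_i=t$ and all $h_i=t$ in \eqref{1.7} gives $h=t$, hence the product is $(g\,d(t);\,\tfrac{t}{t}t,\dots)=(gd;\,t,\dots,t)$, which lies in $\mathcal{A}$ since $gd\in\F^{(\ell)}_0$; likewise \eqref{1.8} with $h=t$ (so $\bar h=t$) yields $(1/g;\,t,\dots,t)\in\mathcal{A}$. For the Lagrange subgroup $\mathcal{L}=\{(1;f_1,\dots,f_\ell)\}$: with $g=d=1$, \eqref{1.7} gives first component $1\cdot 1(h)=1$, so the product stays in $\mathcal{L}$; \eqref{1.8} gives first component $1/1(\bar h)=1$, so $\mathcal{L}$ is closed under inverses. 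Here the only thing to check carefully is that each $\tfrac{f_i}{h}h_i(h)$ (and each $\tfrac{t\bar h}{f_i(\bar h)}$) again lies in $t\F^{(\ell)}$ of order exactly $1$, which follows from tracking orders: $f_i,h\in t\F^{(\ell)}$ have order $1$, $h_i(h)$ has order $1$, so the ratio times $h_i(h)$ has order $1$ and only involves powers $t^{\ell k+1}$.

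For the derivative subgroup $\mathcal{D}=\{(h'(t);f_1,\dots,f_\ell): h=\sqrt[\ell]{f_1\cdots f_\ell}\}$: given two such arrays $(g;f_\bullet)$ with $g=h'$ and $(d;h_\bullet)$ with $d=k'$ where $k=\sqrt[\ell]{h_1\cdots h_\ell}$, I must show the product $(h'\cdot k'(h);\,\tfrac{f_1}{h}h_1(h),\dots)$ is again a derivative array, i.e. that its first component equals the derivative of the $\ell$-th root of the product of its multiplier components. Writing $H$ for that new geometric mean, $H^\ell=\prod_i \tfrac{f_i}{h}h_i(h) = \tfrac{1}{h^\ell}\big(\prod f_i\big)\big(\prod h_i(h)\big)=\tfrac{h^\ell}{h^\ell}\cdot k(h)^\ell=k(h)^\ell$, so $H=k(h)$; then by the chain rule $H'=(k(h))'=k'(h)\,h'=d(h)\,g$, which is exactly the required first component. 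The same $H=k(h)$ identity, applied to \eqref{1.8}, shows the inverse is a derivative array, since there the geometric mean of the inverted multipliers is $t\bar h/h(\bar h)=\bar h$'s counterpart and its derivative matches $1/g(\bar h)$ after differentiating the relation $h(\bar h)=t$. I expect this to be the main obstacle: the bookkeeping is the heart of the proof, and one must be careful that $h'$ is interpreted as a genuine formal derivative landing in $\F^{(\ell)}_0$ (note $h$ has order $1$, so $h'$ has order $0$), and that the chain-rule manipulations are legitimate for formal power series.

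Finally, for the $j$-th Bell subgroup $\mathcal{B}_j=\{(g;f_1,\dots,f_\ell): f_j=tg\}$: given $(g;f_\bullet)$ with $f_j=tg$ and $(d;h_\bullet)$ with $h_j=th\!\cdot$ wait — the condition is $h_j(t)=t\,d(t)$ in the second factor's own variable; after composing, the $j$-th multiplier of the product is $\tfrac{f_j}{h}h_j(h)=\tfrac{tg}{h}\cdot h\,d(h)=tg\,d(h)$, which is exactly $t$ times the first component $g\,d(h)$ of the product — so the product lies in $\mathcal{B}_j$. For the inverse \eqref{1.8}, the $j$-th multiplier is $\tfrac{t\bar h}{f_j(\bar h)}=\tfrac{t\bar h}{\bar h\,g(\bar h)}=\tfrac{t}{g(\bar h)}$, which is $t$ times the first component $1/g(\bar h)$, so the inverse lies in $\mathcal{B}_j$. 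In each case I will conclude by noting the subset is nonempty, closed under the group operation, and closed under inverses, hence a subgroup; the verifications for $\mathcal{A}$, $\mathcal{L}$, and $\mathcal{B}_j$ are short substitutions, while $\mathcal{D}$ requires the geometric-mean-composition identity $H=k(h)$ together with the formal chain rule, and that is where the real work lies.
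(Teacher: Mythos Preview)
The paper states Proposition~\ref{pro:2.3} without proof, so there is no argument to compare against. Your proposal is correct and is exactly the natural verification: check that each family contains the identity and is closed under the product \eqref{1.7} and the inverse \eqref{1.8}. The computations for $\mathcal{A}$, $\mathcal{L}$, and $\mathcal{B}_j$ are the obvious substitutions and go through as you describe.

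For $\mathcal{D}$ your closure-under-product argument is clean: the geometric mean $H$ of the new multipliers satisfies $H^{\ell}=\prod_i\frac{f_i}{h}h_i(h)=\frac{h^{\ell}}{h^{\ell}}\,k(h)^{\ell}$, whence $H=k(h)$ and the chain rule gives $H'=k'(h)\,h'$, matching the first component. Your inverse step is stated a bit elliptically; to make it explicit, the product of the inverted multipliers in \eqref{1.8} is
\[
\prod_{i=1}^{\ell}\frac{t\bar h}{f_i(\bar h)}=\frac{t^{\ell}\bar h^{\ell}}{h(\bar h)^{\ell}}=\bar h^{\ell},
\]
so the new geometric mean is $\bar h$, and differentiating $h(\bar h(t))=t$ gives $\bar h\,'=1/h'(\bar h)=1/g(\bar h)$, which is precisely the first component of the inverse. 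With that line spelled out, the argument for $\mathcal{D}$ is complete.
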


\section{Sequence characterization of Riordan type arrays}

\begin{definition}\label{def:3.1}
Let $(d_{n,k})_{n,k\in \NS}=(g,f)$ be a Riordan type array, where $g,f\in \F_0$ with $g(0)\not= 0$. Then $(\tilde d_{n,k})_{n,k\in \NS}=(g,tf)$ is called the Riordan array associated with the Riordan type array, where $g,f\in \F_0$ with $g(0)\not= 0$. If $f_0\not= 0$, $(g,tf)$ is a proper Riordan array.
\end{definition}

It is clear that 

\be\label{3.1}
d_{n,k}=\tilde d_{n+k,k} \quad \mbox{and} \quad \tilde d_{n,k}=d_{n-k,k}.
\ee
Hence, we have the following sequence characterization of Riordan type arrays.

\begin{theorem}\label{thm:3.2}
Let $(d_{n,k})_{n,k\in \NS}=(g,f)$ be a Riordan type array, where $g,f\in \F_0$ with $g(0)\not= 0$, and let $(\tilde d_{n,k})_{n,k\in \NS}=(g,tf)$ is the Riordan array associated with the Riordan type array $(g,f)$, where $g,f\in \F_0$ with $g(0)\not= 0$. Then there exists the $A$-sequence $(a_0, a_1, \ldots)$ for $(g,f)$ such that 

\be\label{3.2}
d_{n,k}=\sum^n_{j=0} a_jd_{n-j,k+j-1},
\ee
or equivalently, the generating function of $A$-sequence $A(t)=\sum_{j \geq 0}a_j t^j$ satisfies 

\begin{align}\label{3.3}
&A(tf)=f, \quad \mbox{or equivalently}\nonumber\\
&A(t)=\frac{t}{\overline{tf}},
\end{align}
where $\overline{tf}$ is the compositional inverse of $tf$.

There also exists the $Z$-sequence $(z_0, z_1, \ldots)$ for $(g,f)$ such that 

\be\label{3.4}
d_{n,0}=\sum^{n-1}_{j=0}z_jd_{n-j-1, j},
\ee
or equivalently, the generating function of $Z$-sequence $Z(t)=\sum_{j\geq 0} z_j t^j$ satisfies 

\be\label{3.5}
g=\frac{d_{0,0}}{1-tZ(tf)}.
\ee
\end{theorem}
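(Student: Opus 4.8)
The plan is to transfer the classical $A$- and $Z$-sequence characterizations of proper Riordan arrays to the Riordan type array $(g,f)$ via the index shift recorded in \eqref{3.1}. First I would recall the standard fact (provable from the fundamental theorem of Riordan arrays, cf. \eqref{Proddef}) that the associated proper Riordan array $\tilde D=(g,tf)$ possesses an $A$-sequence: there are scalars $a_0,a_1,\ldots$ with $a_0\neq 0$ such that $\tilde d_{n+1,k+1}=\sum_{j\geq 0}a_j\tilde d_{n,k+j}$, and that the generating function $A(t)=\sum_{j\geq0}a_jt^j$ is characterized by $tf=\bigl(\overline{tf}\bigr)A\bigl(\overline{tf}\bigr)^{-1}$-type relations; more precisely, writing $F=tf$, the column relation $F=F\circ(\text{shift})$ translates into $A$ satisfying $A(F(t))=F(t)/t$, i.e. $A(tf)=f$. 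Equivalently, setting $u=\overline{tf}$ one gets $A(t)=t/\overline{tf}$, which is \eqref{3.3}. Then I substitute the index dictionary $\tilde d_{n,k}=d_{n-k,k}$ into the $A$-sequence recursion for $\tilde D$: the left side $\tilde d_{n+1,k+1}$ becomes $d_{n-k,k+1}$, and the $j$-th term on the right, $a_j\tilde d_{n,k+j}$, becomes $a_j d_{n-k-j,k+j}$. Re-indexing (replace $n-k$ by a fresh $n$ and $k+1$ by a fresh $k$, so the summand index $j$ is shifted by one) yields exactly $d_{n,k}=\sum_{j\geq0}a_j d_{n-j,k+j-1}$, which is \eqref{3.2}; the sum is naturally finite, running $j=0,\ldots,n$, because $d_{m,\ell}=0$ for $m<0$.

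For the $Z$-sequence, I would proceed in the same spirit. The proper Riordan array $\tilde D=(g,tf)$ has a $Z$-sequence $z_0,z_1,\ldots$ with $\tilde d_{n+1,0}=\sum_{j\geq0}z_j\tilde d_{n,j}$, and the generating function $Z(t)=\sum_{j\geq0}z_jt^j$ satisfies the classical identity $g=d_{0,0}/\bigl(1-tf\cdot Z(tf)\bigr)$ — here I must be slightly careful about whether the literature's normalization uses $g$ versus $g/g_0$; since $\tilde d_{0,0}=d_{0,0}=g_0$, the stated form \eqref{3.5} with numerator $d_{0,0}$ is the right bookkeeping. Substituting $\tilde d_{n,k}=d_{n-k,k}$ into $\tilde d_{n+1,0}=\sum_j z_j\tilde d_{n,j}$ gives $d_{n+1,0}=\sum_{j\geq0}z_j d_{n-j,j}$, and relabeling $n+1\to n$ produces \eqref{3.4} with upper limit $n-1$ (again automatic from vanishing of negative-index entries).

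The only genuine work is establishing the generating-function identities \eqref{3.3} and \eqref{3.5} for the associated array $(g,tf)$, and here there is a subtlety worth flagging: the classical $A$/$Z$-sequence theory is usually stated for \emph{proper} Riordan arrays, i.e. requires $f_0\neq0$ so that $tf\in\F_1$ with nonzero leading coefficient and $\overline{tf}$ exists. Definition~\ref{def:3.1} already isolates this by calling $(g,tf)$ proper precisely when $f_0\neq0$, so I would state the theorem under that hypothesis (or note it), and then the existence of $A$ and $Z$ and the formulas are the textbook results applied to $(g,tf)$. I expect the main obstacle to be purely notational — getting the two index substitutions and the re-indexing of the summation variable exactly right so that \eqref{3.2} and \eqref{3.4} come out with the displayed shifts $k+j-1$ and the displayed summation ranges — rather than anything conceptually deep. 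A clean way to present it is: (i) quote the Riordan-array $A$- and $Z$-sequence theorem for $(g,tf)$; (ii) rewrite its two defining recursions using \eqref{3.1}; (iii) read off \eqref{3.2}–\eqref{3.5}, noting that the generating-function characterizations \eqref{3.3} and \eqref{3.5} are literally the ones for $(g,tf)$ and require no change since they involve $g$ and $tf$ directly.
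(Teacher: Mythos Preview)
Your proposal is correct and follows essentially the same route as the paper: the paper's own justification (given as a Remark immediately after the theorem) simply observes that the $A$- and $Z$-sequence identities for the associated proper Riordan array $(g,tf)$ yield $tf=tA(tf)$ and $g=d_{0,0}/(1-tZ(tf))$, which is exactly your transfer via \eqref{3.1}. One small slip: in your $Z$-sequence paragraph you wrote $g=d_{0,0}/\bigl(1-tf\cdot Z(tf)\bigr)$, but the classical formula for $(g,F)$ with $F=tf$ gives $g=d_{0,0}/(1-tZ(tf))$, i.e.\ no extra factor of $f$; you already identify \eqref{3.5} as the target, so this is just a typo to clean up.
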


\begin{remark}
For a Riordan type array $(g,f)$, which associates a Riordan array $(g,tf)$. From the $A$-sequence of $(g,tf)$, we have 

\[
tf=tA(tf),
\]
which also implies \eqref{3.3}. Similarly, from the $Z$-sequence of $(g,tf)$, we have 

\[
g=\frac{d_{0,0}}{1-tZ(tf)},
\]
i.e., \eqref{3.5}.
\end{remark}

\begin{corollary}\label{cor:3.3}
Let $(d_{n,k})_{n,k\in \NS}=(g,f)$ be a Riordan type array, where $g,f\in \F_0$ with $g(0)\not= 0$.
Then, its $A$-sequence $(a_0, a_1, \ldots)$ satisfies 

\be\label{3.6}
d_{n,k}=\sum^k_{\ell =0}\sum^n_{j=1}a_0^\ell a_j d_{n-j, k+j-\ell -1}.
\ee

The $Z$-sequence $(z_0, z_1, \ldots)$ of $(g,f)$ satisfies 

\be\label{3.7}
d_{n,0}=z_0^n d_{0,0}+\sum^{n-1}_{\ell =1}\sum^{n-\ell}_{j =1}z_0^{\ell -1} z_jd_{n-j-\ell, j}.
\ee
\end{corollary}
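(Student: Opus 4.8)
The strategy is to derive both \eqref{3.6} and \eqref{3.7} by isolating the $j=0$ term in the respective sequence characterizations of Theorem~\ref{thm:3.2} and then unrolling the resulting first-order recursions; each identity then reduces to a straightforward induction --- on the column index $k$ for the $A$-sequence statement, and on the row index $n$ for the $Z$-sequence statement.

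For \eqref{3.6}, I would first split off the $j=0$ term in \eqref{3.2}. Since $d_{n-0,\,k+0-1}=d_{n,k-1}$, this yields, for $k\ge 1$,
\[
d_{n,k}=a_0\,d_{n,k-1}+\sum_{j=1}^{n}a_j\,d_{n-j,\,k+j-1},
\]
while for $k=0$ the term $a_0 d_{n,-1}$ is absent (there being no column $-1$), so \eqref{3.2} becomes $d_{n,0}=\sum_{j=1}^{n}a_j d_{n-j,\,j-1}$, which is precisely the $\ell=0$ instance of the asserted formula and will serve as the base case. In the inductive step I would substitute the induction hypothesis for $d_{n,k-1}$ into the displayed recursion; multiplying by $a_0$ turns each power $a_0^{\ell}$ into $a_0^{\ell+1}$ and lowers the relevant column index by one, so after the reindexing $\ell\mapsto\ell+1$ the previous sum supplies the terms with $1\le\ell\le k$, and the leftover piece $\sum_{j=1}^n a_j d_{n-j,\,k+j-1}$ supplies the $\ell=0$ term. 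Collecting these gives \eqref{3.6}. (Equivalently, one may simply iterate the displayed recursion $k$ times down to $d_{n,0}$ and use the $k=0$ identity to absorb the residual $a_0^{k}d_{n,0}$ into the $\ell=k$ summand.)

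For \eqref{3.7} the argument runs in parallel, now starting from the $Z$-sequence relation \eqref{3.4}. Peeling off the $j=0$ term there gives, for $n\ge 1$,
\[
d_{n,0}=z_0\,d_{n-1,0}+\sum_{j=1}^{n-1}z_j\,d_{n-j-1,\,j},
\]
with $d_{0,0}$ as the terminal value. Iterating this first-order recursion in $n$ produces
\[
d_{n,0}=z_0^{n}\,d_{0,0}+\sum_{i=0}^{n-1}z_0^{i}\sum_{j=1}^{\,n-i-1}z_j\,d_{n-i-j-1,\,j},
\]
and the substitution $\ell=i+1$ --- the summand for $\ell=n$ being an empty inner sum --- converts this into \eqref{3.7}. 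As before this can be phrased as an induction on $n$ whose step combines the recursion with the induction hypothesis for $d_{n-1,0}$ and the shift $i\mapsto i+1$.

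I do not expect a genuine obstacle: all the substance is contained in the two characterizations of Theorem~\ref{thm:3.2}, and what remains is bookkeeping. The only points requiring care are the boundary conventions --- the vanishing of the $j=0$ contribution in column $0$, so that \eqref{3.2} at $k=0$ takes the form used above, and the empty-sum conventions at the extreme values of the outer index --- together with keeping the index shifts in the reindexing exactly right; I would double-check both by testing \eqref{3.6} and \eqref{3.7} against the recursions for the first few values of $n$ and $k$.
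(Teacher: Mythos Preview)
Your proposal is correct and is essentially the same argument as the paper's: both proofs peel off the $j=0$ term from \eqref{3.2} (resp.\ \eqref{3.4}) and then iterate the resulting first-order recursion on the column index $k$ (resp.\ the row index $n$), with the convention $d_{n,-1}=0$ handling the boundary. The paper writes out the iteration explicitly rather than casting it as an induction, but the content is identical.
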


\begin{proof}
Since 

\begin{align*}
d_{n,k}=&\sum^n_{j=0} a_jd_{n-j,k+j-1}=a_0d_{n, k-1}+\sum^n_{j=i} a_jd_{n-j,k+j-1}\\
=&a_0\sum^n_{j=0} a_jd_{n-j,k+j-2}+\sum^n_{j=1} a_jd_{n-j,k+j-1}\\
=&a_0^2d_{n,k-2}+a_0\sum^n_{j=1} a_jd_{n-j,k+j-2}+\sum^n_{j=1} a_jd_{n-j,k+j-1}\\
=&\cdots\\
=&a_0^k\sum^n_{j=0} a_jd_{n-j,j-1}+a_0^{k-1}\sum^n_{j=1} a_jd_{n-j,j}+\cdots +a_0\sum^n_{j=1} a_jd_{n-j,k+j-2}+\sum^n_{j=1} a_jd_{n-j,k+j-1},
\end{align*}
which implies \eqref{3.6} by assuming $d_{n,-1}=0$. 
we have 

\[
[t^n]gf^k=\sum^k_{j=0}[t^{n-1}]a_j g a_j f^j=[t^{n-1}]g\sum^k_{j=0} a_j f^j,
\]
which implies \eqref{3.1}.

Similarly, from \eqref{3.4} we have 

\begin{align*}
d_{n,0}=& z_0d_{n-1,0}+\sum^{n-1}_{j=1}z_j d_{n-j-1, j}\\
=& z_0\sum^{n-2}_{j=0}z_j d_{n-j-2, j}+\sum^{n-1}_{j=1}z_j d_{n-j-1, j}\\
=&z_0^2 d_{n-2,0}+z_0\sum^{n-2}_{j=1}z_j d_{n-j-2, j}+\sum^{n-1}_{j=1}z_j d_{n-j-1, j}\\
=&\cdots \\
=&z_0^{n-1}d_{1,0}+z_0^{n-2}\sum^{1}_{j=1}z_j d_{1-j,j}+\cdots +z_0\sum^{n-2}_{j=1}z_j d_{n-j-2, j}+\sum^{n-1}_{j=1}z_j d_{n-j-1, j}\\
=&z_0^n d_{0,0}+z_0^{n-2}\sum^{1}_{j=1} z_j d_{1-j,j}+\cdots +z_0\sum^{n-2}_{j=1}z_j d_{n-j-2, j}+\sum^{n-1}_{j=1}z_j d_{n-j-1, j},
\end{align*}
which gives \eqref{3.7}
\end{proof}

\begin{example}\label{ex:2.1}
Considering the Riordan type array $(1/(1-t), 1/(1-t))$, which is called the square matrix version of Pascal's triangle and has the first few rows as 

\[
\left( \frac{1}{1-t}, \frac{1}{1-t}\right)=\left [ \begin{array}{llllll} 1& 1& 1& 1& 1& \cdots\\
1&2 &3& 4& 5&\cdots\\
1&3& 6&10& 15& \cdots\\
1&4& 10& 20&35&\cdots\\
\vdots &\vdots& \vdots& \vdots&\vdots&\ddots\end{array}\right].
\]
Using \eqref{3.2} yields its $A$-sequence $(1,1,0,\ldots)$, i.e., 

\[
d_{n,k}=d_{n,k-1}+d_{n-1, k}.
\]

From \eqref{3.6} and noting $a_0=1$, $a_1=1$, and $a_i=0$ for $i\geq 2$, we have 

\begin{align*}
d_{n,k}=&\sum^k_{\ell=0} d_{n-1, k-\ell}=d_{n-1,0}+d_{n-1,1}+\cdots d_{n-1,k}.
\end{align*}

From either \eqref{3.4} and \eqref{3.7} and noting the $Z$-sequence is $(1,0,\ldots)$, we obtain 

\[
d_{n,0}=d_{n-1,0}=\cdots =d_{0,0}.
\]

Multiplying the square version of Pascal's triangle by the diagonal matrix $diag(2^{0/2}, 2^{1/2}, 2^{2/2}, \ldots)$ yields the classical Delannoy matrix 

\[
\left( \frac{1}{1-t}, \frac{1+t}{1-t}\right)=\left [ \begin{array}{llllll} 1& 1& 1& 1& 1& \cdots\\
1&3 &5& 7& 9&\cdots\\
1&13& 25&41& 61& \cdots\\
1&25& 63& 129&231&\cdots\\
\vdots &\vdots& \vdots& \vdots&\vdots&\ddots\end{array}\right].
\]
It is well known that the entries of the classical Delannoy matrix $(d_{n,k})_{n,k\geq 0}=(1/(1-t), (1+t)/(1-t))$ satisfies the basic relation:

\[
d_{n,k}=\left\{ \begin{array}{ll} 1 &if\,\, n=0\,\, or k=0,\\
d_{n-1, k}+d_{n-1, k-1}+d_{n,k-1}, &otherwise.\end{array}\right.
\]
The above basic relation gives a sequence characterization of the triangle version of the classical Delannoy matrix $(\tilde d_{n,k})_{n,k\geq 0}=(1/(1-t), t(1+t)/(1-t))$ by using \eqref{3.1}:

\[
\tilde d_{n+k,k}=\tilde d_{n+k-1,k}+\tilde d_{n+k-2,k-1}+\tilde d_{n+k-1, k-1}
\]
for $k\geq 1$. The triangle version of the classical Delannoy matrix is 

\[
\left( \frac{1}{1-t}, \frac{t(1+t)}{1-t}\right)=\left [ \begin{array}{llllll} 1& 0& 0& 0& 0& \cdots\\
1&1 &0& 0& 0&\cdots\\
1&3& 1&0& 0& \cdots\\
1&5& 5& 1&0&\cdots\\
1&7& 13& 7&1&\cdots\\
\vdots &\vdots& \vdots& \vdots&\vdots&\ddots\end{array}\right].
\]
\end{example}

\begin{example}\label{ex:2.2}
Let $s(t)=\frac{1+t-\sqrt{1-6t+t^2}}{4t}$ be the generating function of the little Schr\"oder numbers, 
and let $r(t)=\frac{1-t-\sqrt{1-6t+t^2}}{2t}$ be the generating function of the large Schr\"oder numbers. Then the first few rows of Riordan type array $(s,t)$ is 

\[
\left( s(t), 1\right)=\left [ \begin{array}{llllll} 1& 1& 1& 1& 1& \cdots\\
1&1 &1& 1& 1&\cdots\\
3&3& 3&3& 3& \cdots\\
11&11& 11& 11&11&\cdots\\
\vdots &\vdots& \vdots& \vdots&\vdots&\ddots\end{array}\right],
\]
and the first few rows of Riordan type array $(s,s)$ is 

\[
\left( s(t), s(t) \right)=\left [ \begin{array}{llllll} 1& 1& 1& 1& 1& \cdots\\
1&2 &3& 4& 5&\cdots\\
3&7& 12&18& 25& \cdots\\
11&28& 52& 84&125&\cdots\\
\vdots &\vdots& \vdots& \vdots&\vdots&\ddots\end{array}\right].
\]

From \eqref{3.2}, we obtain that the generating functions of $A$-sequences of $(s,1)$ and $(s,s)$ are $A(t)=t$ and $A(t)=\frac{1-t}{1-2t}$, respectively. Hence, the $A$-sequence of $(s,s)$ is $(1,1,2,4,\ldots)$.

From \eqref{3.5}, we know the generating function of $Z$-sequence of $(s,1)$ satisfies 

\[
s(t)=\frac{1}{1-tZ(t)},
\]
or equivalently, 

\begin{align*}
Z(t)=&\frac{1}{t}\left( 1-\frac{1}{s(t)}\right)\\
=&\frac{1}{t}\left( 1-\frac{4t}{1+t-\sqrt{1-6t+t^2}}\right)\\
=&\frac{(1-3t-\sqrt{1-6t+t^2})}{t(1+t-\sqrt{1-6t+t^2})}\\
=&\frac{1-t-\sqrt{1-6t+t^2}}{2t}=r(t).
\end{align*}
Hence, the $Z$-sequence of $(s,1)$ is $(1,2,6,\ldots)$, and the above equation implies $s(t)(1-tr(t))=1$.

Similarly, from \eqref{3.5}, we know the generating function of $Z$-sequence of $(s,s)$ satisfies 

\[
s(t)=\frac{1}{1-tZ(ts)},
\]
or equivalently, 

\[
Z(ts)=\frac{1}{t}\left( 1-\frac{1}{s}\right)=\frac{1}{t}-\frac{1}{ts}.
\]
Since the compositional inverse of $ts$ is 

\[
\overline{ts}=\frac{2t^2-t}{t-1},
\]
we have 

\[
Z(t)=\frac{t-1}{2t^2-t}-\frac{1}{t}=\frac{1}{1-2t},
\]
which gives $Z$-sequence of $(s,s)$, $(1,2,4,\ldots)$. 
\end{example}

In Shapiro and Song \cite{SS}, a matrix poset is defined as 

\begin{definition}\label{def:3.2} \cite{SS}
Let matrices $A$, $B$ and $T$ have only nonnegative integer entries. Then we write 
$A\leq B$ if $AT = B$ and the equality holds only when $T = I$. The matrix $T$ (or $T_{A\to B}$ if needed) is called the transit matrix from $A$ to $B$. Another, maybe more intuitive, partial order has $A\preceq B$ if $a_{i,j}\leq b_{i,j}$ for all $i,j$.
\end{definition}

From \eqref{3.1} we immediately obtain the following result. 

\begin{proposition}\label{pro:3.7}
Let ${\cal R}'_s$ be the subset of ${\cal R}_s$ with nonnegative integer entries, 
and let ${\cal R}'$ be the subset of the Riordan group ${\cal R}$ with nonnegative integer entries.   Then, ${\cal R}'_s$ has a poset defined by 

\be\label{3.8}
(g,f) \preceq (d,h) \quad iff\,\,(g,tf)\preceq (d, th),
\ee
where $g,f,d,h\in \F_0$.
\end{proposition}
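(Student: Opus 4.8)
The plan is to verify directly that the relation defined by \eqref{3.8} inherits the partial-order axioms (reflexivity, antisymmetry, transitivity) from the known partial order $\preceq$ on ${\cal R}'$, using the entrywise dictionary \eqref{3.1} between a Riordan type array and its associated Riordan array. The key observation is that the map $\Phi:(g,f)\mapsto(g,tf)$ sends ${\cal R}'_s$ into ${\cal R}'$ and is injective: if $(g,tf)=(d,th)$ as arrays then $g=d$ and $tf=th$, hence $f=h$, so $(g,f)=(d,h)$. Thus \eqref{3.8} is exactly the pullback under $\Phi$ of the relation $\preceq$ already available on ${\cal R}'$, and pulling back a partial order along an injection always yields a partial order. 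So the whole proposition reduces to (i) checking $\Phi$ is well-defined and injective on the nonnegative-integer subsets, and (ii) invoking that $\preceq$ is a partial order on matrices with nonnegative integer entries (this is immediate from Definition~\ref{def:3.2}, since $\preceq$ there is literally the entrywise order).

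First I would record that for $(g,f)\in{\cal R}'_s$ the entries $d_{n,k}=[t^n]gf^k$ are nonnegative integers, and by \eqref{3.1} the entries $\tilde d_{n,k}=d_{n-k,k}$ of $(g,tf)$ (with $\tilde d_{n,k}=0$ for $n<k$) are again nonnegative integers; hence $(g,tf)\in{\cal R}'$. Conversely, the entries of $(g,f)$ are recovered from those of $(g,tf)$ via $d_{n,k}=\tilde d_{n+k,k}$, so $\Phi$ is a bijection from ${\cal R}'_s$ onto the set of proper/improper nonnegative-integer Riordan arrays of the form $(g,tf)$, and in particular injective.

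Next I would prove the three axioms. Reflexivity: $(g,f)\preceq(g,f)$ because $(g,tf)\preceq(g,tf)$ entrywise, trivially. Antisymmetry: if $(g,f)\preceq(d,h)$ and $(d,h)\preceq(g,f)$, then $(g,tf)\preceq(d,th)$ and $(d,th)\preceq(g,tf)$ entrywise, so $\tilde d_{n,k}=\tilde{d'}_{n,k}$ for all $n,k$, i.e. $(g,tf)=(d,th)$; by injectivity of $\Phi$ we get $(g,f)=(d,h)$. Transitivity: if $(g,f)\preceq(d,h)\preceq(e,\phi)$ then $(g,tf)\preceq(d,th)\preceq(e,t\phi)$ entrywise, and entrywise order is transitive, so $(g,tf)\preceq(e,t\phi)$, i.e. $(g,f)\preceq(e,\phi)$. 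This establishes that \eqref{3.8} defines a poset on ${\cal R}'_s$.

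I do not anticipate a genuine obstacle here; the content is essentially bookkeeping. The one point that deserves care is making sure the index shift in \eqref{3.1} does not create negative or non-integer entries — i.e. that $\Phi$ really does land in ${\cal R}'$ and is reversible — which is why I would state the correspondence $d_{n,k}\leftrightarrow\tilde d_{n+k,k}$ explicitly before invoking it. A secondary (cosmetic) point is whether one also wants the companion statement for the first order $\leq$ of Definition~\ref{def:3.2}; if so, the same pullback argument applies verbatim, with the transit matrix $T_{(g,f)\to(d,h)}$ defined to be $T_{(g,tf)\to(d,th)}$, and one checks $T=I$ on one side corresponds to $T=I$ on the other.
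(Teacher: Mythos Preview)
Your proposal is correct and follows exactly the route the paper indicates: the paper simply remarks that the result follows immediately from \eqref{3.1}, and your argument is nothing more than a careful unpacking of that remark via the injection $\Phi:(g,f)\mapsto(g,tf)$ and pullback of the entrywise order. Your version is more explicit than the paper's one-line justification, but substantively identical.
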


\begin{example}
Let $s(t)$ and $r(t)$ be the generating functions of the little Schr\"oder numbers and the large Schr\"oder numbers shown in Example \ref{ex:2.2}, respectively. Then the first few rows of Riordan type array $(s,s^2)$ is

\[
\left( s(t), s(t)^2 \right)=\left [ \begin{array}{llllll} 1& 1& 1& 1& 1& \cdots\\
1&3 &5& 7& 9&\cdots\\
3&12& 25&42& 63& \cdots\\
11&52& 125& 238&399&\cdots\\
\vdots &\vdots& \vdots& \vdots&\vdots&\ddots\end{array}\right].
\]

Similarly, 

\[
\left( s(t), \frac{s-1}{t} \right)=\left [ \begin{array}{llllll} 1& 1& 1& 1& 1& \cdots\\
1&4 &7& 10& 13&\cdots\\
3&17& 40&72& 113& \cdots\\
11&76& 216& 458&829&\cdots\\
\vdots &\vdots& \vdots& \vdots&\vdots&\ddots\end{array}\right].
\]

\[
\left( s(t), \frac{s(s-1)}{t(2-s)}\right)=\left [ \begin{array}{llllll} 1& 1& 1& 1& 1& \cdots\\
1&6 &11& 16& 21&\cdots\\
3&33& 88&168& 273& \cdots\\
11&178& 620& 1462&2829&\cdots\\
\vdots &\vdots& \vdots& \vdots&\vdots&\ddots\end{array}\right].
\]
and

\[
\left( s(t), r(t) \right)=\left [ \begin{array}{llllll} 1& 1& 1& 1& 1& \cdots\\
1&3 &5& 7& 9&\cdots\\
3&11& 23&39& 59& \cdots\\
11&45& 107& 205&347&\cdots\\
\vdots &\vdots& \vdots& \vdots&\vdots&\ddots\end{array}\right].
\]

Thus, we have a poset diagram for the above Riordan type arrays:

\[
(s,1)\longrightarrow (s,s)\longrightarrow (s,r)\longrightarrow \left(s, \frac{s-1}{t}\right) \longrightarrow \left(s, \frac{s(s-1)}{t(2-s)}\right).
\]
A similar poset diagram related $(s, s^2)$ and similar discussions for Fig. 1 of \cite{SS}  can be obtained. 
\end{example}

\section{Sequence characterizations of multiple Riordan arrays and multiple Riordan type arrays}

We now discuss the sequence characterization of multiple Riordan arrays. Inspired by Branch, Davenport, Frankson, Jones, and Thorpe \cite{BDFJT}, Davenport, Frankson, Shapiro, and Woodson \cite{DFSW}, and \cite{He25}, we consider $D=(g;f_1,f_2,\ldots, f_\ell)$ as 

\begin{align}\label{3.9}
D=&(g, gf_1, g(f_1f_2), \ldots, g(f_1f_2\cdots f_\ell), gf_1(f_1f_2\cdots f_\ell), 
\ldots, g(f_1f_2\cdots f_\ell)^2, \ldots)\nonumber\\
=&(g, 0,\ldots,  g(f_1f_2\cdots f_\ell), 0, \ldots, g(f_1f_2\cdots f_\ell)^2, 0, \ldots)\nonumber\\
&+(0, gf_1, 0, \ldots, gf_1(f_1f_2\cdots f_\ell), 0,\ldots,  gf_1(f_1f_2\cdots f_\ell)^2, 0\ldots)\nonumber\\
&+(0,0, gf_1f_2, 0, \ldots, gf_1f_2(f_1f_2\cdots f_\ell), 0,\ldots,  gf_1f_2(f_1f_2\cdots f_\ell)^2, 0\ldots)+\ldots \nonumber\\
\nonumber\\
=&D_0+D_1+D_2+\cdots +D_\ell.
\end{align}
After omitting zero columns and top zero rows, we denote the remaining $D_1$, $D_2,\ldots$, and $D_\ell$ shown above by $D_1^*$, $D_2^*,\ldots$, and $D_\ell^*$, respectively. Then $D_1^*=(g, f_1f_2\cdots f_\ell)=(d^{(1)}_{n,k})_{n,k\geq 0}$, $D_2^*=(gf_1/t, f_1f_2\cdots f_\ell)=(d^{(2)}_{n,k})_{n,k\geq 0}, \ldots$, and $D_\ell^*=(gf_1\cdots f_{\ell -1}/t^{\ell -1}, f_1f_2\cdots f_\ell)=(d^{(\ell)}_{n,k})_{n,k\geq 0}$ are Riordan arrays. Hence, $(g;f_1,f_2,\ldots, f_\ell)$ has $\ell$ $Z$-sequences, denoted by $Z_0$-, $Z_1, \ldots$, $Z_{\ell-1}$-sequences, and one $A$-sequence in this view, while the view shown in \cite{He24, He25} provides a $Z$-sequnce, and $\ell$ $A$-sequences.

\begin{theorem}\label{thm:new-3.1}
Let $(d_{n,k})_{n, k\geq 0}=(g;f_1,f_2,\ldots, f_\ell)$ be a multiple Riordan array, and let $A(t)=\sum_{k\geq 0}a_kt^{\ell k}$, $Z_i(t)=\sum_{k\geq 0} z_{i,k} t^{\ell k}$, $i=0, 1,2,\ldots, \ell -1$, be the generating functions of $A$-, $Z_0$-, $Z_1$-, $\ldots$, and $Z_{\ell -1}$-sequences, respectively. Then

\begin{align}\label{3.10}
&A(t)=\frac{t^\ell}{\overline {h}^\ell},\\
&Z_0(t)=  \frac{1}{\overline{h}^\ell}\left( 1-\frac{g_0}{g(\overline{h})}\right)\label{3.11}\\
&Z_m(t)=\frac{1}{\overline{h}^\ell}\left( 1-\frac{g_0f_{1,1}f_{2,1}\cdots f_{m,1}\overline{h}^m}{g(\overline{h})f_1(\overline{h})f_2(\overline{h})\cdots f_m(\overline{h})}\right)\label{3.12},
\end{align}
where $m=1,2,\ldots, \ell-1$, and $\overline{h}$ is the compositional inverse of $h=\sqrt[\ell]{f_1f_2\cdots f_\ell}$.
\end{theorem}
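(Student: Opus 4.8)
The plan is to reduce everything to the three already-established facts: the decomposition \eqref{3.9} of $D=(g;f_1,\ldots,f_\ell)$ into the Riordan arrays $D_1^*=(g,f_1f_2\cdots f_\ell)$, $D_2^*=(gf_1/t,f_1f_2\cdots f_\ell)$, \ldots, $D_\ell^*=(gf_1\cdots f_{\ell-1}/t^{\ell-1},f_1f_2\cdots f_\ell)$; the classical $A$-sequence characterization of a Riordan array $(\gamma,\phi)$, namely $\phi=tA(\phi)$ where $A(t)=t/\overline{\phi}$; and the classical $Z$-sequence characterization, $\gamma=\gamma_0/(1-tZ(\phi))$, equivalently $Z(t)=\tfrac1{\overline\phi}\bigl(1-\gamma_0/\gamma(\overline\phi)\bigr)$. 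Here every $D_m^*$ has the \emph{same} second component $\phi=f_1f_2\cdots f_\ell=h^\ell$, so all of them share one $A$-sequence; that common $A$-sequence is the $A$-sequence of $D$. Since $\overline{\phi}$ and $\overline{h}$ are related by $\overline{h^\ell}=\overline{h}$ applied to $t^\ell$ — more precisely, writing $h^\ell$ as a series in $t^\ell$ and using that the compositional inverse of $h$ is $\overline h$, one gets $\overline{f_1\cdots f_\ell}(t^\ell)=\overline{h}(t)^\ell$ — substituting into $A(t)=t/\overline{\phi}$ (read in the variable $t^\ell$) yields $A(t)=t^\ell/\overline{h}^\ell$, which is \eqref{3.10}. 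I would be slightly careful here to state precisely in what variable each generating function lives, since $A$, $Z_i$ are series in $t^\ell$ while the underlying Riordan-array identities are written in the ordinary variable; this bookkeeping is the one genuinely fiddly point.

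For the $Z$-sequences, $Z_0$ is by definition the $Z$-sequence of $D_1^*=(g,h^\ell)$ and $Z_m$ (for $1\le m\le\ell-1$) is the $Z$-sequence of $D_{m+1}^*=(gf_1\cdots f_m/t^m,\,h^\ell)$. Apply the formula $Z(t)=\tfrac1{\overline\phi}\bigl(1-\gamma_0/\gamma(\overline\phi)\bigr)$ with $\phi=h^\ell$ (so $\overline\phi$ becomes $\overline h^\ell$ after the change of variable) and with $\gamma=g$ for $Z_0$, respectively $\gamma=gf_1\cdots f_m/t^m$ for $Z_m$. For $Z_0$ the constant term $\gamma_0=g_0$ and one gets \eqref{3.11} immediately. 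For $Z_m$ the constant term of $gf_1\cdots f_m/t^m$ is $g_0 f_{1,1}f_{2,1}\cdots f_{m,1}$ (since each $f_j$ has order $1$ with leading coefficient $f_{j,1}$), and evaluating $\gamma$ at the compositional inverse gives $g(\overline h)f_1(\overline h)\cdots f_m(\overline h)/\overline h^{\,m}$; plugging these in and simplifying the ratio produces exactly \eqref{3.12}.

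The only real obstacle is verifying cleanly that the $A$- and $Z$-sequence characterizations of the \emph{auxiliary} Riordan arrays $D_m^*$ — which are honest Riordan arrays in the ordinary sense once the zero columns and top zero rows are stripped — do transfer to $A$- and $Z_i$-sequences of $D$ in the sense intended by the theorem (i.e. that the recurrence relations \eqref{3.2}-type and \eqref{3.4}-type they encode for $D_m^*$ reassemble, via the column/row reindexing implicit in \eqref{3.9}, into the claimed recurrences for the full array $D$). This is conceptually the same bookkeeping that underlies the definition of $D_m^*$ right before the theorem, so I expect it to go through by a direct index chase; I would spell out the case $\ell=2$ first (matching the double-Riordan literature \cite{BDFJT,DFSW}) and then indicate that the general $\ell$ is identical. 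Once that is in hand, formulas \eqref{3.10}--\eqref{3.12} are just substitutions into the single-variable $A$- and $Z$-sequence formulas, together with the identity $\overline{f_1\cdots f_\ell}(t^\ell)=\overline h(t)^\ell$.
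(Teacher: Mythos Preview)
Your proposal is correct and follows exactly the approach the paper sets up: the paper does not give a separate proof of this theorem, but the paragraph containing \eqref{3.9} and the definition of the auxiliary Riordan arrays $D_1^*,\ldots,D_\ell^*$ \emph{is} the intended argument, and you have filled in precisely the missing steps (apply the classical $A$- and $Z$-formulas to each $D_{m+1}^*=(gf_1\cdots f_m/t^m,\,h^\ell)$ and pass through the change of variable $\overline{f_1\cdots f_\ell}(t^\ell)=\overline h(t)^\ell$). Your identification of the constant term of $gf_1\cdots f_m/t^m$ as $g_0f_{1,1}\cdots f_{m,1}$ and your caution about the $t$-versus-$t^\ell$ bookkeeping are exactly the points that need to be made explicit.
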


\begin{corollary}\label{cor:3.12} 
Let $D=(g;f_1,\ldots f_\ell)\in {\cal M}{\cal R}$. Then $D$ is in ${\cal A}$, i.e., $f_1=\cdots=f_\ell=t$  if and only if the generating functions of its $A$-sequences $A=1$. $D$ is in the Lagrange subgroup of ${\cal M\cal R}$, i.e., $g=1$, if and only if the generating functions of the $Z_0$-sequence and $Z_m$-sequence satisfy:

\[
Z_0(t)=0, \quad Z_m(t)=\frac{1}{\bar h^\ell}\left( 1-\frac{f_{1,1}f_{2,1}\cdots f_{m,1}\overline{h}^m}{f_1(\overline{h})f_2(\overline{h})\cdots f_m(\overline{h})}\right),\quad m=1, \ldots, \ell-1.
\]
\end{corollary}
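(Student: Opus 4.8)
The plan is to derive Corollary~\ref{cor:3.12} as a direct specialization of the formulas \eqref{3.10}--\eqref{3.12} from Theorem~\ref{thm:new-3.1}, treating the two assertions (membership in $\mathcal A$ and membership in $\mathcal L$) separately.

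First I would handle the Appell case. By Proposition~\ref{pro:2.3}, $D\in\mathcal A$ means $f_1=\cdots=f_\ell=t$, which forces $h=\sqrt[\ell]{f_1\cdots f_\ell}=t$ and hence $\overline h=t$. Substituting $\overline h=t$ into \eqref{3.10} gives $A(t)=t^\ell/t^\ell=1$. Conversely, if $A(t)=1$, then \eqref{3.10} yields $\overline h^{\,\ell}=t^\ell$ as formal power series, so $\overline h=t$ (taking the branch with $\overline h\in\F_1^{(\ell)}$ and leading coefficient $1$), whence $h=t$; since $h^\ell=f_1f_2\cdots f_\ell=t^\ell$ and each $f_i\in t\mathbb K[[t^\ell]]$ has order exactly $1$, comparing orders and leading coefficients forces $f_{i,1}=1$ for each $i$, and then one argues inductively on the coefficients of $t^{\ell k+1}$ that $f_i=t$ for all $i$. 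This last inductive step — showing that a product of $\ell$ series in $t\mathbb K[[t^\ell]]$ equalling $t^\ell$ forces each factor to be $t$ — is the one place where a small amount of care is needed, though it is essentially the uniqueness of the $\ell$-th root in the relevant ring; I would note that it follows because $\F^{(\ell)}\cong\F$ under $t^\ell\mapsto u$, reducing it to the standard fact that $\overline{g}=g$ and $g_1g_2\cdots g_\ell$ being a monomial forces each $g_i$ to be that monomial's $\ell$-th root times a root of unity, here pinned down to $1$ by the characteristic-zero leading-coefficient normalization.

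Next I would handle the Lagrange case. If $g=1$ then $g_0=1$ and $g(\overline h)=1$, so \eqref{3.11} gives $Z_0(t)=\frac{1}{\overline h^{\,\ell}}(1-1)=0$, and \eqref{3.12} collapses the factor $g_0/g(\overline h)=1$, leaving exactly the stated expression for $Z_m(t)$. Conversely, suppose $Z_0(t)=0$ and the $Z_m$ have the stated form. From $Z_0(t)=0$ and \eqref{3.11} we get $1=g_0/g(\overline h)$, i.e.\ $g(\overline h)=g_0$ as a formal power series in $t$; composing with $h$ gives $g(t)=g_0$, a constant. But $g\in\F_0^{(\ell)}$ with $g(0)=g_0$, so $g\equiv g_0$; to conclude $g=1$ one uses the normalization built into the multiple Riordan group that the $(0,0)$ entry is $d_{0,0}=g_0=1$ (the identity is $(1;t,\ldots,t)$ and every element has $d_{0,0}=g_0$), or else one simply states the Lagrange subgroup as $\{(1;f_1,\ldots,f_\ell)\}$ and reads off $g_0=1$. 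I expect the main obstacle, such as it is, to be purely expository: making explicit which normalization ($g_0=1$ versus $g_0$ arbitrary) is in force, since Theorem~\ref{thm:new-3.1} carries $g_0$ through the formulas while Proposition~\ref{pro:2.3} writes the Appell and Lagrange subgroups with $g$ ranging over $\F_0^{(\ell)}$ or equal to $1$; I would add one sentence fixing the convention $d_{0,0}=1$ so that "$g=1$" and "$g$ constant" coincide.

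Finally I would assemble the two equivalences into the corollary statement, remarking that no new computation beyond \eqref{3.10}--\eqref{3.12} is required: the forward directions are one-line substitutions of $\overline h=t$ (resp.\ $g=1$) into the theorem's formulas, and the reverse directions invert those substitutions using only the uniqueness of compositional inverses and of $\ell$-th roots in $\mathbb K[[t^\ell]]$ over a characteristic-zero field. I would keep the proof to a short paragraph per case.
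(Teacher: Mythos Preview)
The paper offers no explicit proof of this corollary; it is stated as an immediate consequence of Theorem~\ref{thm:new-3.1}, and your forward directions (substituting $f_i=t$, respectively $g=1$, into \eqref{3.10}--\eqref{3.12}) are correct and match that spirit. Your treatment of the Lagrange converse is also sound, including the observation that once $Z_0=0$ forces $g(\overline h)=g_0$ (hence $g$ constant), the displayed form of $Z_m$ follows automatically from \eqref{3.12}, so only the normalization $g_0=1$ remains.

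Your converse for the Appell case, however, has a genuine gap. From $A(t)=1$ and \eqref{3.10} you correctly get $\overline h^{\,\ell}=t^\ell$, hence $h=t$ and $f_1f_2\cdots f_\ell=t^\ell$. But the step you flag as needing ``a small amount of care'' actually fails: a product of $\ell$ series in $t\,\mathbb K[[t^\ell]]$ equalling $t^\ell$ does \emph{not} force each factor to be $t$. For $\ell=2$, take $f_1(t)=t(1+t^2)$ and $f_2(t)=t/(1+t^2)$; both lie in $t\,\mathbb K[[t^2]]$ with leading coefficient $1$, their product is $t^2$, so $h=t$ and $A(t)=1$, yet $(g;f_1,f_2)\notin\mathcal A$. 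The ``standard fact'' you invoke --- that a product of power series equalling a monomial forces each factor to be a root-of-unity multiple of a monomial --- is simply false (units in $\mathbb K[[u]]$ are plentiful). Since the single $A$-sequence of Theorem~\ref{thm:new-3.1} depends only on $h=\sqrt[\ell]{f_1\cdots f_\ell}$, it cannot distinguish the individual $f_i$; the ``only if'' direction as literally stated therefore cannot be salvaged by a sharper induction. One must either reinterpret $\mathcal A$ as the set with $h=t$, or bring in additional data (for instance the $Z_m$'s, which do see the individual $f_i$) to recover a true biconditional.
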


\begin{example}\label{ex:3.3}
Consider Example $1$ given in Barry \cite{Bar24} for the case $\ell=3$, we may set 

\begin{align*}
&g(t)=\frac{1}{1-t^3},\\
&f_1(t)=\frac{t}{1-t^3},\\
&f_2(t)=t(1+t^3),\\
&f_3(t)=\frac{t}{1+t^3}.
\end{align*}
Thus 

\[
h(t)=\sqrt[3]{f_1f_2f_3}=\frac{t}{\sqrt[3]{1-t^3}}\quad \mbox{and}\quad \overline {h}(t)=\frac{t}{\sqrt[3]{1+t^3}}.
\]
The first few rows of Riordan type array $(g;f_1,f_2,f_3)$ is 

\[
\left(\frac{1}{1-t^3};\frac{t}{1-t^3},t(1+t^3),\frac{t}{1+t^3}\right)=\left [ \begin{array}{llllllllll} 
1& 0& 0& 0& 0&0& 0& 0& 0&  \cdots\\
0&1&0 &0& 0& 0& 0& 0& 0& \cdots\\
0&0& 1&0& 0& 0& 0& 0& 0& \cdots\\
1&0& 0& 1&0& 0& 0& 0& 0& \cdots\\
0&2& 0& 0&1& 0& 0& 0& 0& \cdots\\
0&0& 3& 0&0& 1& 0& 0& 0& \cdots\\
1&0& 0& 2&0& 0& 1& 0& 0& \cdots\\
0&3& 0& 0&3& 0& 0& 1& 0& \cdots\\
0&0& 5& 0&0& 4& 0& 0& 1& \cdots\\
\vdots &\vdots& \vdots& \vdots&\vdots&\vdots &\vdots &\vdots &\vdots &\ddots\end{array}\right].
\]

From Theorem \ref{thm:new-3.1}, 

\begin{align*}
&A(t)=1+t^3,\\
&Z_0(t)=1,\\
&Z_1(t)=1+\frac{1}{1+t^3},\\
&Z_2(t)=1+\frac{2}{1+2t^3}.
\end{align*}
Hence, $A$-sequence, $Z_0$-, $Z_1$-, and $Z_2$-sequences are 

\begin{align*}
&A=(1,0,0,1,0,\ldots), \\
&Z_0=(1,0,\ldots),\\
&Z_1=(2,0,0, -1, 0,0, 1,0,0, -1,\ldots),\\
&Z_2=(3, 0,0, -4, 0,0, 8, \ldots).
\end{align*} 

The production matrix of $(g;f_1,f_2, f_3)$ is 

\[
P=\left [ \begin{array}{llllllllll} 
1& 0& 0& 1& 0&0& 0& 0& 0&  \cdots\\
0& 2& 0& 0& 1& 0& 0& 0& 0& \cdots\\
0& 0& 3& 0& 0& 1& 0& 0& 0& \cdots\\
0& 0& 0& 1&0& 0& 1& 0& 0& \cdots\\
0&-1& 0& 0&1& 0& 0& 1& 0& \cdots\\
0& 0& -4& 0&0& 1& 0& 0& 1& \cdots\\
0& 0& 0& 0&0& 0& 1& 0& 0& \cdots\\
0& 1& 0& 0&0& 0& 0& 1& 0& \cdots\\
0& 0& 8& 0&0& 0& 0& 0& 1& \cdots\\
\vdots &\vdots& \vdots& \vdots&\vdots&\vdots &\vdots &\vdots &\vdots &\ddots\end{array}\right].
\]
\end{example}

\begin{definition}\label{def:3.3}
Let $(d_{n,k})_{n,k\in \NS}=(g;f_1,f_2,\ldots, f_\ell)$, where 

\[
d_{n,k}=[t^n]g f_1^{\lfloor \frac{k+\ell -1}{\ell}\rfloor}f_2^{\lfloor \frac{k+\ell -2}{\ell}\rfloor}\cdots f_\ell^{\lfloor \frac{k}{\ell}\rfloor},
\]
be a multiple Riordan type array, and $g,f_i\in \F^{(\ell)}_0$, $i=1,2,\ldots, \ell$, $g(0)\not= 0$. Then $(\tilde d_{n,k})_{n,k\in \NS}=(g;tf_1, tf_2,\ldots, tf_\ell)$ is called the multiple Riordan array associated with the multiple Riordan type array, where $g,f\in \F^{(\ell)}_0$ with $g(0)\not= 0$. If $f_{i,0}\not= 0$, $i=1,2,\ldots, \ell$, $(g;tf_1, tf_2, \ldots, tf_\ell)$ is a proper multiple Riordan array.
\end{definition}

It is clear that \eqref{3.1} holds for the the multiple Riordan array and its association of the multiple Riordan type array, namely, $d_{n,k}=\tilde d_{n+k,k}$ and $\tilde d_{n,k}=d_{n-k,k}$. Hence, we have the following sequence characterization of multiple Riordan type arrays.

\begin{theorem}\label{thm:3.2-2}
Let $(d_{n,k})_{n,k\in \NS}=(g; f_1,f_2,\ldots, f_\ell)$ be a multiple Riordan type array, where $g,f_i\in \F^{(\ell)}_0$, $i=1,2,\ldots, \ell$, with $g(0)\not= 0$, and let $(\tilde d_{n,k})_{n,k\in \NS}=(g;tf_1, tf_2, \ldots, tf_\ell)$ is the multiple Riordan array associated with the multiple Riordan type array $(g;f_1, f_2, \ldots, f_\ell)$, where $g,f_i\in \F^{(\ell)}_0$, $i=1,2\ldots, \ell$, with $g(0)\not= 0$. Then there exists the $A$-sequence $(a_0, a_1, \ldots)$ for $(g;f_1, f_2, \ldots, f_\ell)$ such that 

\be\label{3.2-2}
d_{n,k}=\sum_{j\geq 0} a_jd_{n-j\ell, k+\ell(j-1)},
\ee
or equivalently, the generating function of $A$-sequence $A(t)=\sum_{j \geq 0}a_j t^{j\ell}$ satisfies 

\begin{align}\label{3.3-2}
&A(th)=h^\ell, \quad \mbox{or equivalently,}\nonumber\\
&A(t)=\frac{t^\ell}{\overline{th}^\ell},
\end{align}
where $h=\sqrt[\ell]{f_1f_2\cdots f_\ell}$, and $\overline{th}$ is the compositional inverse of $th$.

There also exists the $Z_0$-sequence $(z_{0,0}, z_{0,1}, \ldots)$ for $(g;f_1,f_2,\ldots, f_\ell)$ such that 

\be\label{3.4-2}
d_{n,0}=\sum_{j\geq 0}z_{m,j} d_{n-(j+1)\ell,j\ell}.
\ee
or equivalently, the generating function of $Z_0$-sequence $Z_0(t)=\sum_{j\geq 0} z_{0,j} t^{j\ell}$ satisfies 

\be\label{3.5-2}
Z_0(t)=\frac{1}{{\overline{th}}^\ell}\left( 1-\frac{g_0}{g(\overline{th})}\right).
\ee

For $m=1,2,\ldots, \ell-1$, there exist the $Z_m$-sequence $(z_{m,0}, z_{m,1}, \ldots)$ for $(g;f_1,f_2,\ldots, f_\ell)$ such that 

\be\label{3.4-3}
d_{n,m}=\sum_{j\geq 0}z_{m,j} d_{n-(j+1)\ell, m+j\ell}
\ee
or equivalently, the generating function of $Z_m$-sequence $Z_m(t)=\sum_{j\geq 0} z_{m,j} t^{j\ell}$ satisfies 

\be\label{3.5-3}
Z_m(t)=\frac{1}{{\overline{th}}^\ell}\left( 1-\frac{g_0f_{1,1}\cdots f_{m,1} {\overline{th}}^m}{g(\overline{th})f_1(\overline{th})\cdots f_m(\overline{th})}\right),
\ee
where $f_{i,1}=[t]f_i$, $i=1,2,\ldots, m$. 
\end{theorem}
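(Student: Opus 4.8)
The plan is to reduce everything to the already-known sequence characterization of ordinary Riordan arrays, applied to the four companion Riordan arrays $D_1^*,\dots,D_\ell^*$ extracted in \eqref{3.9}, together with the index shift \eqref{3.1} that connects a multiple Riordan type array to its associated multiple Riordan array. Recall that $(\tilde d_{n,k})=(g;tf_1,\dots,tf_\ell)$ is the associated multiple Riordan array; by Theorem~\ref{thm:new-3.1} it has one $A$-sequence and $\ell$ $Z$-sequences $Z_0,\dots,Z_{\ell-1}$, with generating functions given by \eqref{3.10}--\eqref{3.12} in terms of $\overline{h}$, the compositional inverse of $h=\sqrt[\ell]{(tf_1)\cdots(tf_\ell)}=t\,\sqrt[\ell]{f_1\cdots f_\ell}$. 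Writing $\mathfrak h$ for $\sqrt[\ell]{f_1\cdots f_\ell}$ (so that the associated array's $h$ is $t\mathfrak h$, matching the ``$th$'' notation in the statement), the whole proof is: (i) transcribe the recurrences \eqref{3.2-2}, \eqref{3.4-2}, \eqref{3.4-3} from the corresponding recurrences for $(\tilde d_{n,k})$ using $d_{n,k}=\tilde d_{n+k,k}$; and (ii) observe that the generating-function identities \eqref{3.3-2}, \eqref{3.5-2}, \eqref{3.5-3} are literally \eqref{3.10}, \eqref{3.11}, \eqref{3.12} with $\overline h$ rewritten as $\overline{th}$.

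Concretely, first I would establish \eqref{3.2-2} and \eqref{3.3-2}. Theorem~\ref{thm:new-3.1} (or, more elementarily, the $A$-sequence characterization of the Riordan arrays $D_1^*=(g,(tf_1)\cdots(tf_\ell))$, etc., noting they all share the same $f$-part $t^\ell\mathfrak h^\ell$) gives $\tilde d_{n,k}=\sum_{j\ge 0}a_j\,\tilde d_{n-1,\,k-1+?}$-type relations; the cleanest route is to use that the associated multiple Riordan array satisfies $\tilde d_{n,k}=\sum_{j\ge0} a_j \tilde d_{n-1,k-1+\ell(j-?)}$ coming from $A(t)=\sum a_j t^{\ell j}$ and then substitute $\tilde d_{n,k}=d_{n-k,k}$. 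After the substitution $n\mapsto n+k$ and a reindexing of $j$, the single step $\tilde d_{n-1,\cdot}$ becomes a jump of $\ell$ in the first index, producing exactly $d_{n,k}=\sum_j a_j d_{n-j\ell,\,k+\ell(j-1)}$. For the functional equation: the $A$-sequence of the associated array satisfies (by \eqref{3.10} applied to that array, whose $h$ is $t\mathfrak h$) $A(t\mathfrak h\cdot t)=\dots$; more transparently, $A((t\mathfrak h))^{\,}$-type relation $A(t h)=h^\ell$ with $h=t\mathfrak h$ here renamed, giving $A(t)=t^\ell/\overline{th}^{\,\ell}$ directly. I will just cite \eqref{3.10} and note the change of name of the variable.

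Next I would do the $Z_m$-sequences for $m=0,1,\dots,\ell-1$ in parallel. The point is that $D_{m+1}^*=(g f_1\cdots f_m/t^m,\ (tf_1)\cdots(tf_\ell))$ is an ordinary Riordan array whose $0$th column is column $m$ of the multiple array (after deleting zero columns), so its classical $Z$-sequence governs how entry $d_{n,m}$ is built from the entries $d_{\cdot,m+j\ell}$. Translating the classical relation $\tilde d_{n,m}=\sum_j z_{m,j}\,\tilde d_{n-1,\,m+j\ell}$ (with the index bookkeeping coming from the fact that these companion arrays live on the sublattice of columns $\equiv m\pmod \ell$) via $\tilde d_{n,k}=d_{n-k,k}$ yields \eqref{3.4-2} and \eqref{3.4-3}; and the generating-function formulas \eqref{3.5-2}, \eqref{3.5-3} are \eqref{3.11}, \eqref{3.12} of Theorem~\ref{thm:new-3.1} applied to the associated array, again just rewriting $\overline h$ as $\overline{th}$ and noting $g_0,f_{i,1}$ are unchanged since multiplying $f_i$ by $t$ shifts orders but the relevant coefficients $g(0)$ and $[t]f_i\cdot t = [t^2](tf_i)$-versus-$[t]f_i$ match up with the $\overline{th}^m$ normalization in the formula.

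The main obstacle, and where I would spend the most care, is the index bookkeeping in step (i): keeping straight the floor-function definition of $d_{n,k}$, the fact that the columns of the multiple array split into $\ell$ arithmetic progressions mod $\ell$, and checking that the ``compression'' $\tilde d_{n,k}\mapsto \tilde d_{n-1,\,k+\ell j-\ell}$ of a single-step Riordan recurrence really produces the $\ell$-step jumps in \eqref{3.2-2}, \eqref{3.4-2}, \eqref{3.4-3} — and that the lower limits of the sums ($j\ge 0$, with $d_{n,\cdot}=0$ for negative second index absorbing boundary terms) come out correctly. Everything else is a transcription of Theorem~\ref{thm:new-3.1} and \eqref{3.1}; I would not re-derive the functional equations from scratch but simply point out that the associated multiple Riordan array's $A$- and $Z_m$-generating functions, with $h$ relabeled as $th$ so that $\overline h$ becomes $\overline{th}$, are exactly \eqref{3.3-2}, \eqref{3.5-2}, \eqref{3.5-3}.
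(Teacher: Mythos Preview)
Your approach is essentially the paper's own: the paper does not give a separate proof of this theorem, but simply observes just before the statement that the index shift \eqref{3.1}, namely $d_{n,k}=\tilde d_{n+k,k}$ and $\tilde d_{n,k}=d_{n-k,k}$, carries over verbatim to the multiple setting, and then declares that the sequence characterization follows. In other words, the intended argument is precisely to apply Theorem~\ref{thm:new-3.1} to the associated multiple Riordan array $(g;tf_1,\dots,tf_\ell)$, whose $h$ is $t\sqrt[\ell]{f_1\cdots f_\ell}=th$, and transport the recurrences and the functional equations \eqref{3.10}--\eqref{3.12} through the index shift; your write-up spells out exactly this transcription (and is, if anything, more explicit about the column-residue bookkeeping than the paper).
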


\begin{example}\label{ex:3.4}
The multiple Riordan type array corresponding to the multiple Riordan array shown in Example 
\ref{ex:3.3} has the first few rows as 

\[
\left(\frac{1}{1-t^3};\frac{1}{1-t^3},1+t^3,\frac{1}{1+t^3}\right)=\left [ \begin{array}{llllllllll} 
1& 1& 1& 1& 1&1& 1& 1& 1&  \cdots\\
0&0&0 &0& 0& 0& 0& 0& 0& \cdots\\
0&0& 0&0& 0& 0& 0& 0& 0& \cdots\\
1&2& 3& 2&3& 4& 3& 4& 5& \cdots\\
0&0& 0& 0&0& 0& 0& 0& 0& \cdots\\
0&0& 0& 0&0& 0& 0& 0& 0& \cdots\\
1&3& 5& 3&6& 9& 6& 10& 14& \cdots\\
0&0& 0& 0&0& 0& 0& 0& 0& \cdots\\
0&0& 0& 0&0& 0& 0& 0& 0& \cdots\\
1&4& 7& 4&10& 16& 10& 20& 30& \cdots\\
\vdots &\vdots& \vdots& \vdots&\vdots&\vdots &\vdots &\vdots &\vdots &\ddots\end{array}\right].
\]
\end{example}

\section{Construction of identities using Riordan type arrays}

To obtain a class of so-called ``umbral identity'' by using ``umbral operator'', we 
consider the Euler operator $ \theta_t = t D_t $,
where $ D_t = \frac{d}{d t} $ is the usual derivative with respect to $t$.
For this operator, we have the following Grunert formula
\cite{Grunert} \cite[Formula (4.8)]{Gou78} \cite[p.\ 310]{GKP}

\begin{equation}\label{thetaId} 
 \theta_t^m = \sum_{k=0}^m { m \brace k } t^k D_t^k \, .
\end{equation}
Notice that, by applying this formula,
we can find the generating series of the powers $ n^m $ \cite[Formula (4.10)]{Gou78}:
\begin{equation}\label{seriesPowers}
 \sum_{k\geq0} k^m t^k = \sum_{k=0}^m { m \brace k } k! \frac{t^k}{(1-t)^{k+1}} \, .
\end{equation}

\begin{theorem}\label{thm-Umbral01}\cite{He22, HM}
 For every $ m, n \in {\bN} $, we have the identity
 \begin{equation}\label{Umbral01}
  \sum_{k=0}^n { n \choose k } k^m x^{n-k} =
  \sum_{k=0}^{m\wedge n} { m \brace k } { n \choose k } k! (x+1)^{n-k}
 \end{equation}
 or, equivalently,
 \begin{equation}\label{Umbral02}
  \sum_{k=0}^n { n \choose k } k^m (-1)^{n-k} x^{n-k} =
  \sum_{k=0}^{m\wedge n} { m \brace k } { n \choose k } k! (1-x)^{n-k} \, ,
 \end{equation}
 where $m\wedge n=\min\{m,n\}$.
\end{theorem}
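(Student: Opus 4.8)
The plan is to establish \eqref{Umbral01} by a direct generating-function computation using the Grunert formula \eqref{thetaId}, and then to obtain \eqref{Umbral02} from \eqref{Umbral01} by the substitution $x \mapsto -x$ together with the elementary sign bookkeeping $k^m x^{n-k} \mapsto k^m(-1)^{n-k}x^{n-k}$ and $(x+1)^{n-k} \mapsto (1-x)^{n-k}$; the equivalence of the two forms is then immediate, so the real content is \eqref{Umbral01}.

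For \eqref{Umbral01} itself, first I would recognize the left-hand side as the coefficient extraction $\sum_{k=0}^n \binom{n}{k} k^m x^{n-k} = n!\,[t^n]\, e^{xt}\,\phi_m(t)$ where $\phi_m(t) = \sum_{k\geq 0} k^m t^k/k!$, or—staying in the ordinary-generating-function world that matches \eqref{seriesPowers}—I would instead apply the umbral/operator approach: write $k^m = \theta_t^m\big|_{t=1}$ acting on $t^k$, so that $\sum_{k=0}^n \binom{n}{k} k^m x^{n-k} = \theta_t^m\big[(x+t)^n\big]\big|_{t=1}$. Now expand $\theta_t^m$ by \eqref{thetaId}: $\theta_t^m\big[(x+t)^n\big] = \sum_{k=0}^m {m\brace k} t^k D_t^k (x+t)^n = \sum_{k=0}^m {m\brace k} t^k \frac{n!}{(n-k)!}(x+t)^{n-k}$, where the term vanishes unless $k\leq n$, giving the range $k \leq m\wedge n$. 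Setting $t=1$ and rewriting $\frac{n!}{(n-k)!} = \binom{n}{k} k!$ yields exactly the right-hand side of \eqref{Umbral01}.

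The main obstacle—though it is a mild one—is justifying the interchange ``$k^m x^{n-k}$ summed against $\binom{n}{k}$ equals $\theta_t^m$ applied termwise and then evaluated at $t=1$'': since $(x+t)^n = \sum_{k=0}^n \binom{n}{k} x^{n-k} t^k$ is a polynomial, $\theta_t^m$ acts on it termwise with $\theta_t^m t^k = k^m t^k$, so evaluating at $t=1$ is legitimate and finite; there is no convergence issue, only the need to state the identity $\theta_t^m t^k = k^m t^k$ and the linearity of $\theta_t^m$ cleanly. A secondary point is confirming that the Stirling-number range truncates correctly: ${m\brace k}=0$ for $k>m$ handles the upper end, and $D_t^k(x+t)^n = 0$ for $k>n$ handles the other, so the summation index is genuinely $0\leq k\leq m\wedge n$ as claimed. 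Once \eqref{Umbral01} is in hand, replacing $x$ by $-x$ throughout and collecting signs gives \eqref{Umbral02}, completing the proof; alternatively one could cite that this is a known identity from \cite{He22, HM}, but the operator derivation above is short and self-contained given \eqref{thetaId}.
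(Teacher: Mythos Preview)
Your proof is correct. The operator identity $\theta_t^m t^k = k^m t^k$ together with the binomial expansion $(x+t)^n = \sum_{k=0}^n \binom{n}{k} x^{n-k} t^k$ does give $\theta_t^m[(x+t)^n]\big|_{t=1} = \sum_{k=0}^n \binom{n}{k} k^m x^{n-k}$, and expanding $\theta_t^m$ via the Grunert formula \eqref{thetaId}, differentiating, and evaluating at $t=1$ yields exactly the right-hand side of \eqref{Umbral01}; the truncation to $k\le m\wedge n$ is justified precisely as you say. The substitution $x\mapsto -x$ then delivers \eqref{Umbral02}.

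As for comparison with the paper: the paper does not actually supply a proof of this theorem. It is stated with the citation \cite{He22, HM} attached and is treated as a known result imported from those references, with the surrounding text (the Grunert formula \eqref{thetaId} and the series \eqref{seriesPowers}) serving only as motivation. Your argument is exactly the kind of short umbral-operator derivation the paper's setup points toward, and it fills in what the paper leaves to the literature.
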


We now use umbral operator in Riordan type arrays to construct a class of identities.

\begin{theorem}\label{ThmRioSum01}
 Given a Riordan matrix $ R = [ r_{n,k} ]_{n,k\geq0} = (g(t),f(t)) $,
 consider the associated Riordan type matrices
 $$
  R^\sharp = [ r_{n,k}^\sharp ]_{n,k\geq0} = (g(t),f(t)+1)
  \qquad\text{and}\qquad
  R^\natural = [ r_{n,k}^\natural ]_{n,k\geq0} = (g(t),f(t)-1) \, .
 $$
 Then, for every $ m, n, s \in {\bN} $, we have the identites
 \begin{align}
  & \sum_{k=0}^n { n \choose k } r_{s,n-k} k^m
  = \sum_{k=0}^m { m \brace k } { n \choose k } k! r_{s,n-k}^\sharp \label{RioSum01} \\
  & \sum_{k=0}^n { n \choose k } (-1)^{n-k} r_{s,n-k} k^m
  = \sum_{k=0}^m { m \brace k } { n \choose k } (-1)^{n-k} k! r_{s,n-k}^\natural, \label{RioSum02} \
 \end{align}
where $s\geq n-k$.
\end{theorem}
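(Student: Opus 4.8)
The plan is to deduce both identities by pushing the polynomial identities of Theorem~\ref{thm-Umbral01} through a single linear functional attached to the $s$-th row of $R$. The first step is to regard \eqref{Umbral01} and \eqref{Umbral02} not as numerical equalities but as identities in the polynomial ring ${\mathbb K}[x]$: for fixed $m$ and $n$, each side is a polynomial of degree at most $n$ in the indeterminate $x$, and Theorem~\ref{thm-Umbral01} asserts that these two polynomials agree coefficientwise (the convention $0^0=1$ being absorbed into the $m=0$ case).

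Next, for each fixed $s\in\bN$ I would introduce the ${\mathbb K}$-linear map $L_s\colon{\mathbb K}[x]\to{\mathbb K}$ determined by $L_s(x^j)=r_{s,j}$ for $j\ge 0$. By the definition \eqref{Radef} we have $r_{s,j}=[t^s]g(t)f(t)^j$, so for any polynomial $p$ one gets $L_s(p)=[t^s]\bigl(g(t)\,p(f(t))\bigr)$; since $p(f(t))$ is a well-defined formal power series, $L_s$ is well defined and manifestly linear (it is nothing but pairing a polynomial, read by degree in $x$, against row $s$ of $R$). Expanding $(x\pm1)^j$ by the binomial theorem and using the definitions $R^\sharp=(g(t),f(t)+1)$ and $R^\natural=(g(t),f(t)-1)$, one then checks $L_s\bigl((x+1)^j\bigr)=[t^s]g(t)(f(t)+1)^j=r^\sharp_{s,j}$ and $L_s\bigl((x-1)^j\bigr)=[t^s]g(t)(f(t)-1)^j=r^\natural_{s,j}$.

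The final step is simply to apply $L_s$ to both sides of \eqref{Umbral01}: by linearity the left side becomes $\sum_{k=0}^n\binom{n}{k}r_{s,n-k}k^m$ and the right side becomes $\sum_{k=0}^{m\wedge n}{m\brace k}\binom{n}{k}k!\,r^\sharp_{s,n-k}$, which is \eqref{RioSum01}; the upper summation limit may be taken to be $m$ because $\binom{n}{k}=0$ for $k>n$, and the hypothesis $s\ge n-k$ merely records the range in which the lower-triangular entries $r_{s,n-k}$ of $R$ are possibly nonzero and is not otherwise used. Applying $L_s$ to \eqref{Umbral02}, this time using $(1-x)^{n-k}=(-1)^{n-k}(x-1)^{n-k}$ together with $L_s\bigl((x-1)^j\bigr)=r^\natural_{s,j}$, yields \eqref{RioSum02} in exactly the same way.

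I expect the only genuine obstacle to be the very first step: one must be certain that \eqref{Umbral01} and \eqref{Umbral02} hold as identities of polynomials in $x$ (equivalently, for infinitely many values of $x$) and not merely for the integer arguments one normally has in mind, since it is precisely polynomial validity that licenses the substitution $x^{n-k}\leadsto r_{s,n-k}$ effected by $L_s$. Because Theorem~\ref{thm-Umbral01} is already established at the polynomial level — its right-hand side being displayed in powers of $x+1$, i.e.\ in umbral form — this causes no difficulty, and everything else is routine bookkeeping with the definition of Riordan and Riordan type arrays.
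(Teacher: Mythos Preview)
Your proof is correct. The paper does not spell out a proof of Theorem~\ref{ThmRioSum01}, but the sentence preceding it (``We now use umbral operator in Riordan type arrays to construct a class of identities'') indicates exactly the mechanism you employ: regard \eqref{Umbral01}--\eqref{Umbral02} as polynomial identities in $x$ and apply the umbral substitution $x^{j}\mapsto r_{s,j}$ via the linear functional $L_s(p)=[t^s]g(t)\,p(f(t))$, so your argument matches the intended approach.
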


\begin{example}
Denote by 

\[
F_\ell(n,1)= \frac{1}{\ell n+1} \binom{\ell n+1}{n},
\]
the Fuss-Catalan numbers. For $\ell=2$, the corresponding Fuss-Catalan numbers  
are  the Catalan numbers.  In general, the Fuss-Catalan numbers are numbers of the form 

\begin{equation}\label{1.4-0}
F_\ell(n,r):=\frac{r}{\ell n+r}\binom{\ell n+r}{n},
\end{equation}
The generating function  $F_m(t)$ for the Fuss-Catalan numbers, $\{ F_\ell(n,1)\}_{n\geq 0}$ is called the generalized binomial series in \cite{GKP}. For $\ell=2$, $F_2(t)=C(t)$, the generating function of the Catalan numbers. From the Lambert's formula for the Taylor expansion of the powers of $F_\ell(t)$ (see \cite{GKP}), we have 

\begin{equation}\label{1.4}
F_\ell^r\equiv F_\ell(t)^r=\sum_{n\geq 0}\frac{r}{\ell n+r}\binom{\ell n+r}{n}t^n
\end{equation}
for all $r\in{\bZ}$. Expression \eqref{1.4} implies the following formula of $F_\ell(t)$:

\begin{equation}\label{1.5}
F_\ell(t)=1+tF^\ell_\ell(t).
\end{equation}
Consider the Riordan array 

\[
(F^p_\ell(t), F_\ell(t)-1)=\left[\frac{p+\ell k}{p+\ell n}{p+\ell n\choose n-k}\right]_{n,k\geq 0}
\]
and the Riordan type array 

\[
(F^p_\ell(t), F_\ell(t))=\left[ \frac{p+k}{\ell n+p+k}{\ell n+p+k\choose n}\right]_{n,k\geq 0},
\]
one may apply \eqref{RioSum01} to obtain identity 

\[
  \sum_{k=0}^n { n \choose k } \frac{p+\ell k}{p+\ell s}{ p+\ell s\choose s-k } k^m =
   \sum_{k=0}^m { m \brace k } { n \choose k } \frac{p+k}{\ell s+p+k}{ \ell s+p+k \choose s} k! \, .
\]
\end{example}

\section{Compressions of multiple Riordan arrays and their total positivity} 

Let $(g;f_1,f_2,\ldots, f_\ell)=(d_{n,k})_{n\geq k\geq 0}\in {\cal M}{\cal R}$. We define its compression $(\hat d_{n,k})_{n\geq k\geq 0}$ as follows:

\be\label{6.1} 
\hat d_{n,k}:=d_{n\ell-(\ell-1)k,k}, \quad n\geq k\geq 0. 
\ee

We now study the structure of the compression of a multiple Riordan array starting from the following theorem. 

\begin{theorem}\label{thm:6.2}
Let $(g;f_1,f_2,\ldots, f_\ell)=(d_{n,k})_{n\geq k\geq 0}$ be a multiple Riordan array with 

\begin{align*}
& g(t)=\sum_{k\geq 0} g_{k}t^{k\ell},\,\,  f_i(t)=\sum_{k\geq 0}f_{i,k}t^{k\ell +1}, \,\, 
\end{align*}
for $i=1,2,\ldots, \ell$, and let its compression array $(\hat d_{n,k})_{n,k\geq 0}$ be defined by \eqref{6.1}. Then we have  

\begin{align}\label{6.1-2} 
&\hat d_{n,0}=[t^n] \hat g(t),\nonumber \\
&\hat d_{n,k}=\begin{cases} [t^n]\hat g (\hat f_1\hat f_2\cdots \hat f_\ell)^{k/\ell }, & \mbox{if $k\equiv 0\,(mod\, \ell)$},\\
[t^n]\hat g \hat f_1\cdots \hat f_m(\hat f_1\cdots \hat f_m)^{(k-m)/\ell}, &\mbox{if $k\equiv m\,(mod\, \ell)$}
\end{cases}
\end{align}
for $k\geq 1$, where 
\begin{align}\label{6.1-3}
&\hat g(t)=\sum_{k\geq 0} g_{k}t^k,\,\, \hat f_i(t)=\sum_{k\geq 0}f_{i,k}t^{k+1}, 
\end{align}
for $i=1,2,\ldots, \ell$.
\end{theorem}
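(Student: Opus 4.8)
The plan is to unwind the definition of the compression \eqref{6.1} directly at the level of coefficients, reducing the identity for $\hat d_{n,k}$ to a change-of-variable statement about the coefficients of the generating functions $g$ and $f_i$. First I would recall that, by the defining formula for a multiple Riordan array,
\[
d_{m,k}=[t^m]\, g\, f_1^{\lfloor (k+\ell-1)/\ell\rfloor} f_2^{\lfloor (k+\ell-2)/\ell\rfloor}\cdots f_\ell^{\lfloor k/\ell\rfloor},
\]
so that for $k\equiv m_0\pmod\ell$ with $0\le m_0\le \ell-1$ we have
\[
d_{m,k}=[t^m]\, g\, f_1\cdots f_{m_0}\,(f_1 f_2\cdots f_\ell)^{(k-m_0)/\ell},
\]
(with the convention that the middle product is empty when $m_0=0$). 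The key observation is a homogeneity/degree bookkeeping fact: since $g\in{\mathbb K}[[t^\ell]]$ and each $f_i\in t{\mathbb K}[[t^\ell]]$, the power series $g\, f_1\cdots f_{m_0}\,(f_1\cdots f_\ell)^{(k-m_0)/\ell}$ is supported on exponents congruent to $m_0+\ell\cdot\frac{k-m_0}{\ell}=k\pmod\ell$; more precisely its generic exponent has the form $k+\ell j$ for $j\ge 0$. Hence the only nonzero entries in column $k$ of $D$ sit in rows $m=k+\ell j$, which is exactly the set of rows $n\ell-(\ell-1)k$ as $n$ ranges over $n\ge k$ (put $n=k+j$: then $n\ell-(\ell-1)k=k+\ell j$). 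This is what makes the compression well-defined and lossless.

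Next I would introduce the "de-interleaving" operator on power series: for $\varphi(t)=\sum_{j\ge0}c_j t^{aj+b}$ (a series supported on an arithmetic progression with common difference $a$), set $\Phi(\varphi)(t):=\sum_{j\ge0}c_j t^{j+b/a}$ when $a\mid b$, or more plainly, $\Phi$ replaces $t^{a}$ by $t$. With the hatted series defined by \eqref{6.1-3} we have precisely $\hat g=\Phi(g)$ with $a=\ell$, and $\hat f_i = \Phi(f_i)$ in the sense that $f_i(t)=\sum f_{i,k}t^{\ell k+1}$ becomes $\hat f_i(t)=\sum f_{i,k}t^{k+1}$. The crucial multiplicativity property is that $\Phi$ is a ring homomorphism when restricted to series supported on the relevant progressions: if $\varphi$ is supported on exponents $\equiv b_1\pmod\ell$ and $\psi$ on exponents $\equiv b_2\pmod\ell$, then $\Phi(\varphi\psi)=\Phi(\varphi)\Phi(\psi)\cdot t^{\lfloor (b_1+b_2)/\ell\rfloor\text{-correction}}$ — one has to track the shift carefully because $t^{\ell k_1+1}\cdot t^{\ell k_2+1}=t^{\ell(k_1+k_2)+2}$, and under $\Phi$ this should map to $t^{k_1+k_2+2}=\hat{}(t^{\ell k_1+1})\cdot\hat{}(t^{\ell k_2+1})$ divided by an appropriate power of $t$ when $2\ge\ell$. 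I would set up the bookkeeping so that for a product of exactly $m_0$ of the $f_i$'s together with a power $(f_1\cdots f_\ell)^{q}$, the total exponent is $(m_0+\ell q)+\ell\cdot(\text{summation index})$; applying $\Phi$ turns this into $(m_0+\ell q)/\ell$-type shifts — but since $m_0\le\ell-1$ and each block $f_1\cdots f_\ell$ contributes exponent $\equiv 0$, the shift in the hatted world is exactly $m_0 + q$, matching $\hat f_1\cdots\hat f_{m_0}(\hat f_1\cdots\hat f_\ell)^{q}$, which has lowest term of degree $m_0+\ell q$ — wait, no: $\hat f_i$ has lowest degree $1$, so $\hat f_1\cdots\hat f_{m_0}(\hat f_1\cdots\hat f_\ell)^q$ has lowest degree $m_0+\ell q$. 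I would reconcile this by noting $k=m_0+\ell q$, so the claimed right-hand side $[t^n]\hat g\,\hat f_1\cdots\hat f_{m_0}(\hat f_1\cdots\hat f_\ell)^q$ is consistent with $n\ge k$.

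The heart of the argument is then the identity, for $\varphi(t)=\sum_{j\ge0}c_j t^{\ell j + r}$ (any fixed residue $r$),
\[
[t^{\ell(n-?) + r}]\,\varphi(t) = [t^{n}]\,\hat\varphi(t)\cdot(\text{appropriate}),
\]
made precise: with $\hat\varphi(t)=\sum_j c_j t^{j+r'}$ for the suitable $r'$, we have $c_j = [t^{\ell j+r}]\varphi = [t^{j+r'}]\hat\varphi$. Applying this to $\varphi = g\,f_1\cdots f_{m_0}(f_1\cdots f_\ell)^q$, whose exponents run through $k+\ell j$, and to $\hat\varphi = \hat g\,\hat f_1\cdots\hat f_{m_0}(\hat f_1\cdots\hat f_\ell)^q$ (using multiplicativity of $\Phi$ on these progression-supported series, which I would verify by a direct coefficient computation), gives
\[
d_{k+\ell j,\,k} = [t^{k+\ell j}]\varphi = [t^{k+j}]\hat\varphi = [t^{k+j}]\hat g\,\hat f_1\cdots\hat f_{m_0}(\hat f_1\cdots\hat f_\ell)^q.
\]
Finally, substituting $n=k+j$ and noting $k+\ell j = n\ell - (\ell-1)k$, the left side is exactly $d_{n\ell-(\ell-1)k,\,k} = \hat d_{n,k}$ by \eqref{6.1}, and the right side is the claimed formula \eqref{6.1-2}. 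The case $k=0$ (with $m_0=0$, $q=0$) reduces to $\hat d_{n,0}=[t^n]\hat g$, which is \eqref{6.1-2}'s first line. I expect the main obstacle to be the clean verification that $\Phi$ is multiplicative on the relevant classes of series \emph{with the correct exponent shift}; the temptation is to wave hands with "replace $t^\ell$ by $t$", but because the $f_i$ carry the extra $t^1$ factor (they lie in $t{\mathbb K}[[t^\ell]]$, not ${\mathbb K}[[t^\ell]]$), a product of $p$ of them lands in $t^p{\mathbb K}[[t^\ell]]$, and when $p\ge\ell$ the residue of $p$ modulo $\ell$ is what matters — so one must carefully split $p = m_0 + \ell q$ and check that the hat operation distributes as $\widehat{t^p(\cdots)} = t^{m_0+\ell q}(\widehat\cdots)$ rather than $t^{p}(\widehat\cdots)$. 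Getting this bookkeeping exactly right, and matching it against the floor-function exponents $\lfloor(k+\ell-i)/\ell\rfloor$ in the definition of $d_{n,k}$, is the one place where care is genuinely required; everything else is a substitution.
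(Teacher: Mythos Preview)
The paper actually states Theorem~\ref{thm:6.2} without proof (it proceeds directly to Example~\ref{ex:3.6}), so there is no ``paper's own proof'' to compare against. Your argument is correct in substance: the compression \eqref{6.1} is designed so that $n\ell-(\ell-1)k=k+\ell(n-k)$ picks out exactly the nonzero entries in column $k$, and the coefficient identity follows by the change of variable $t^\ell\mapsto t$. Your identification of the delicate point---that the $f_i$ lie in $t\,\mathbb{K}[[t^\ell]]$ rather than $\mathbb{K}[[t^\ell]]$, so the ``hat'' operation must track the extra $t$-factor through products---is right, and your resolution via $k=m_0+\ell q$ is valid.

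That said, the proposal is cluttered: the operator $\Phi$ is introduced, half-defined, corrected mid-sentence (``wait, no''), and then only informally invoked. A much cleaner execution of the \emph{same} idea is to observe the exact substitution identities
\[
g(t)=\hat g(t^\ell),\qquad f_i(t)=t^{\,1-\ell}\,\hat f_i(t^\ell)\quad(i=1,\dots,\ell),
\]
which hold termwise from \eqref{6.1-3}. Since the exponents $e_i=\lfloor(k+\ell-i)/\ell\rfloor$ satisfy $e_1+\cdots+e_\ell=k$, multiplying gives
\[
g\,f_1^{e_1}\cdots f_\ell^{e_\ell}
= t^{(1-\ell)k}\,\bigl[\hat g\,\hat f_1^{e_1}\cdots\hat f_\ell^{e_\ell}\bigr](t^\ell),
\]
and extracting $[t^{\,n\ell-(\ell-1)k}]$ on the left is exactly $[t^{\,n\ell}]$ of the bracketed series evaluated at $t^\ell$, i.e.\ $[t^n]\,\hat g\,\hat f_1^{e_1}\cdots\hat f_\ell^{e_\ell}$. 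This replaces the $\Phi$-discussion with two lines and removes the bookkeeping you flagged as the ``main obstacle''. I would recommend rewriting the proof along these lines; the mathematics you have is sound, but the presentation as it stands would not pass as a finished proof.
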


\begin{example}\label{ex:3.6}
As an example, $\hat R=(1/(1-t), t, t/(1-t))$ 
is the compression of the double Riordan array $(1/(1-t^2), t, t/(1-t^2))$ (cf. \cite{He18}).  
The compression of $\left(\frac{1}{1-t^3};\frac{t}{1-t^3},t(1+t^3),\frac{t}{1+t^3}\right)$ shown in Example \ref{ex:3.3} is 

\[
\left(\frac{1}{1-t};\frac{t}{1-t},t(1+t),\frac{t}{1+t}\right)=\left [ \begin{array}{lllllllll} 
1& 0& 0& 0& 0&0& 0&0&  \cdots\\
1&1&0 &0& 0& 0& 0&0& \cdots\\
1&2& 1&0& 0& 0& 0& 0& \cdots\\
1&3& 3& 1&0& 0& 0& 0& \cdots\\
1&4& 5& 2&1& 0& 0& 0& \cdots\\
1&5& 7& 3&3& 1& 0& 0& \cdots\\
1&6& 9& 4&6& 4& 1& 0&\cdots\\
1&7& 11& 5&10& 9& 4& 1&  \cdots\\
\vdots &\vdots& \vdots& \vdots&\vdots&\vdots &\vdots  &\ddots\end{array}\right].
\]
\end{example}

The sequence characterization of the compression of a multiple Riordan array is given in the following theorem. 

\begin{theorem}\label{thm:6.3}
Let $(g;f_1,\ldots, f_\ell)=(d_{n,k})_{n\geq k\geq 0}$ be a multiple Riordan array, and let its compression be defined by $(\hat d_{n,k})_{n\geq k\geq 0}$, where $\hat d_{n,k}$ is shown in \eqref{6.1}. 
Suppose the $A$-, $Z_i$-sequences, $i=0,1,\ldots, \ell-1$ of $(g;f_1,f_2\ldots, f_\ell)$ are 

\[
A=\{ a_{0}, a_{1}, \ldots\}, \quad 
Z_i=\{ z_{i,0}, z_{i,1},\ldots\}, \quad i=0,1,\ldots, \ell-1,
\]
with the generating functions $A(t)=\sum_{k\geq 0}a_{k}t^{\ell k}$ and $Z_i(t)=\sum_{k\geq 0} z_{i,k} t^{\ell k}$, $i=0,1,\ldots, \ell-1$. Then, 

\begin{align}\label{3.10-3}
&A\left( \sqrt[\ell]{\frac{\hat f_1 \hat f_2\cdots \hat f_\ell}{t^{\ell -1}}}\right)=\frac{\hat f_1 \hat f_2\cdots \hat f_\ell}{t^\ell},\\
&Z_0\left(\sqrt[\ell]{\frac{\hat f_1 \hat f_2\cdots f_\ell}{t^{\ell -1}}}\right)=\frac{1}{t} \left( 1-\frac{g_0}{\hat g(t)}\right),\label{3.11-3}\\
&Z_m\left(\sqrt[\ell]{\frac{\hat f_1 \hat f_2\cdots f_\ell}{t^{\ell -1}}}\right)=\frac{1}{t}\left( 1-\frac{g_0f_{1,1}f_{2,1}\cdots f_{m,1}t^{m}}{\hat g \hat f_1\hat f_2\cdots \hat f_m}\right),\quad m=1, 2,\ldots, \ell-1,\label{3.12-3}
\end{align}
where $f_{i,1}=[t]f_i$, $i=1,2,\ldots, m$, or equivalently, 

\begin{align}\label{6.2}
&\hat d_{n,k}=\sum_{j\geq 0} a_j \hat d_{n-\ell+j(\ell -1), k+(j-1)\ell},\quad k\geq \ell,\\
&\hat d_{n,m}=\sum_{j\geq 0}z_{m,j} \hat d_{n-1+j(\ell-1), m+j\ell}, \quad m=0,1,\ldots, \ell-1.\label{6.3}
\end{align}
\end{theorem}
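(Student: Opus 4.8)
The plan is to derive the compression identities by transporting the known sequence characterizations of the multiple Riordan array $(g;f_1,\ldots,f_\ell)$—namely Theorem~\ref{thm:new-3.1}—through the index substitution $\hat d_{n,k}=d_{n\ell-(\ell-1)k,k}$. The first step is to record, via Theorem~\ref{thm:6.2}, the generating functions of the columns of the compression: $\hat d_{n,0}=[t^n]\hat g$, and for $k\equiv m\ (\mathrm{mod}\ \ell)$ the $k$-th column has generating function $\hat g\,\hat f_1\cdots\hat f_m\,(\hat f_1\cdots\hat f_\ell)^{(k-m)/\ell}$, where $\hat g,\hat f_i$ are as in \eqref{6.1-3}. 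The crucial observation is the bookkeeping relation between $f_i(t)=\sum_k f_{i,k}t^{k\ell+1}$ and $\hat f_i(t)=\sum_k f_{i,k}t^{k+1}$: namely $f_i(t)=t\,\hat f_i(t^\ell)/t^\ell\cdot t^{\ell}$—more precisely $f_i(t)=t^{1-\ell}\cdot$ (something); I will write it cleanly as $\hat f_i(t^\ell)=t^{\ell-1}f_i(t)$ and $\hat g(t^\ell)=g(t)$, so that $h(t)^\ell=f_1\cdots f_\ell=t^{-\ell(\ell-1)}\hat f_1(t^\ell)\cdots\hat f_\ell(t^\ell)\cdot t^{\ell}$. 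Tracking this substitution $t\mapsto t^\ell$ is the main bookkeeping obstacle, and getting the powers of $t$ exactly right is where an error would most likely creep in.

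Next I would establish the two equivalent forms separately. For the functional equations \eqref{3.10-3}--\eqref{3.12-3}: start from \eqref{3.10}--\eqref{3.12}, which express $A,Z_m$ in terms of $\bar h$, the compositional inverse of $h=\sqrt[\ell]{f_1\cdots f_\ell}$. Substituting $f_i=t^{1-\ell}\hat f_i(t^\ell)\cdot t^{\ell-1}$ (i.e. the relation above) gives $h(t)^\ell=\hat f_1(t^\ell)\cdots\hat f_\ell(t^\ell)/t^{\ell(\ell-1)}$, hence $h(t)=\big(\hat f_1(t^\ell)\cdots\hat f_\ell(t^\ell)/t^{\ell(\ell-1)}\big)^{1/\ell}$. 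Writing $u=t^\ell$ and inverting, one finds that evaluating $A$ at $\sqrt[\ell]{\hat f_1\cdots\hat f_\ell/t^{\ell-1}}$ is exactly the content of $A(h)=h^\ell$ after the change of variable, and similarly \eqref{3.11}--\eqref{3.12} turn into \eqref{3.11-3}--\eqref{3.12-3} upon replacing $g(\bar h)$ by $\hat g(t)$, $f_i(\bar h)$ by $\hat f_i(t)$ and $\bar h^\ell$ by $t$ under the substitution. This is a direct—if notation-heavy—rewriting, using that $g(t)=\hat g(t^\ell)$ and the analogous identities for the $f_i$.

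For the recurrences \eqref{6.2}--\eqref{6.3}: I would take the $A$-sequence recurrence for the (uncompressed) multiple Riordan type array, which by \eqref{3.2-2} reads $d_{n,k}=\sum_{j\ge0}a_j d_{n-j\ell,k+\ell(j-1)}$, and apply the substitution $n\mapsto n\ell-(\ell-1)k$. On the left one gets $d_{n\ell-(\ell-1)k,k}=\hat d_{n,k}$. On the right, the term $d_{n-j\ell,\,k+\ell(j-1)}$ with first index shifted becomes $d_{n\ell-(\ell-1)k-j\ell,\,k+\ell(j-1)}$; to recognize this as a $\hat d$ entry I must check that $n\ell-(\ell-1)k-j\ell$ equals $n'\ell-(\ell-1)k'$ with $k'=k+\ell(j-1)$, which forces $n'=n-\ell+j(\ell-1)$ after a short computation—exactly the index appearing in \eqref{6.2}. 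The same manipulation applied to the $Z_m$-recurrences \eqref{3.4-2}/\eqref{3.4-3} yields \eqref{6.3}. The equivalence of the functional-equation form and the recurrence form is then the same passage between a sequence and its generating function already used in Theorems~\ref{thm:3.2} and~\ref{thm:new-3.1}, so I would simply invoke that. The one genuine subtlety—beyond arithmetic of indices—is verifying the range restrictions ($k\ge\ell$ in \eqref{6.2}, $m=0,\ldots,\ell-1$ in \eqref{6.3}), which come from the requirement that all indices appearing remain in the valid lower-triangular range $n'\ge k'\ge 0$; this is where I would spend the most care, and it is the step I expect to be the main obstacle.
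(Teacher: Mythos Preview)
Your overall plan—transport Theorem~\ref{thm:new-3.1} through the substitution $t\mapsto t^\ell$ for the functional equations, and push the uncompressed recurrence through the index map $\hat d_{n,k}=d_{n\ell-(\ell-1)k,k}$ for \eqref{6.2}--\eqref{6.3}—is exactly the right strategy, and your derivation of \eqref{3.10-3}--\eqref{3.12-3} via $\hat g(t^\ell)=g(t)$, $\hat f_i(t^\ell)=t^{\ell-1}f_i(t)$ is sound once the bookkeeping is cleaned up. (The paper states the theorem without proof, so there is no alternative argument to compare against.)

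There is, however, a genuine error in the recurrence half. You invoke \eqref{3.2-2}, but that is the $A$-recurrence for a multiple Riordan \emph{type} array, whereas Theorem~\ref{thm:6.3} concerns a multiple Riordan array. For the latter, the relation $h^\ell=t^\ell A(h)$ (equivalent to \eqref{3.10}) gives, since column $k$ equals $h^\ell$ times column $k-\ell$,
\[
d_{n,k}=\sum_{j\ge0} a_j\, d_{\,n-\ell,\; k+(j-1)\ell},
\]
with the first index shifted by the fixed amount $\ell$, not by $j\ell$. Your computation actually confirms this: plugging \eqref{3.2-2} into $\hat d_{n,k}=d_{n\ell-(\ell-1)k,k}$ and solving $n\ell-(\ell-1)k-j\ell=n'\ell-(\ell-1)\bigl(k+\ell(j-1)\bigr)$ yields $n'=n-j+(\ell-1)(j-1)=n-(\ell-1)+j(\ell-2)$, which is \emph{not} the $n-\ell+j(\ell-1)$ you claim (the two differ by $j-1$). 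If instead you start from the correct recurrence $d_{n,k}=\sum_j a_j d_{n-\ell,\,k+(j-1)\ell}$, the same calculation gives
\[
n'=\frac{n\ell-(\ell-1)k-\ell+(\ell-1)(k+(j-1)\ell)}{\ell}=n-1+(\ell-1)(j-1)=n-\ell+j(\ell-1),
\]
which is exactly \eqref{6.2}. The analogous correction applies to the $Z_m$-recurrences: for the multiple Riordan array one has $d_{n,m}=\sum_j z_{m,j}\,d_{n-\ell,\,m+j\ell}$ (from $g f_1\cdots f_m=g_0f_{1,1}\cdots f_{m,1}t^m+t^\ell Z_m(h)\cdot g f_1\cdots f_m$), and this transports to \eqref{6.3} under the compression map. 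So the fix is simply to derive and use the correct uncompressed recurrence rather than \eqref{3.2-2}; after that, your index-substitution argument goes through verbatim.
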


The compression of a multiple Riordan type array $(d; h_1,h_2,\ldots, h_\ell)$, $d(t)=\sum_{k\geq 0} d_kt^{k\ell}$ and $h_i(t)=\sum_{k\geq 0} h_{i,k}t^{k\ell}$ is defined by 

\[
(\hat d; \hat h_1, \hat h_2,\ldots, \hat h_\ell)=(d(t^{1/\ell}); h_1(t^{1/\ell}), h_2(t^{1/\ell}),\ldots, h_\ell(t^{1/\ell})).
\]

For example, the compression of the multiple Riordan type array shown in Example 
\ref{ex:3.4} has the first few rows as 

\[
\left(\frac{1}{1-t};\frac{1}{1-t},1+t,\frac{1}{1+t}\right)=\left [ \begin{array}{llllllllll} 
1& 1& 1& 1& 1&1& 1& 1& 1&  \cdots\\
1&2& 3& 2&3& 4& 3& 4& 5& \cdots\\
1&3& 5& 3&6& 9& 6& 10& 14& \cdots\\
1&4& 7& 4&10& 16& 10& 20& 30& \cdots\\
\vdots &\vdots& \vdots& \vdots&\vdots&\vdots &\vdots &\vdots &\vdots &\ddots\end{array}\right].
\]

Following Karlin \cite{Kar} and Pinkus \cite{Pin}, an infinite matrix is called totally positive (abbreviate, TP), if its minors of all orders are nonnegative. An infinite nonnegative sequence $(a_n)_{n\geq 0}$ is called a P\'olya frequency sequence (abbreviate, PF), if its Toeplitz matrix

\[
\left[a_{i-j}\right]_{i,j\geq 0}=\left[ \begin{array} {lllll} a_0 & & & &  \\
a_1& a_0 & & & \\ a_2 &a_1& a_0& &  \\ a_3& a_2 & a_1& a_0 & \\
\vdots& \vdots &\vdots&\vdots & \ddots \end{array}\right]
\]
is TP. We say that a finite sequence $(a_0, a_1,\ldots, a_n)$ is PF if the corresponding infinite sequence $(a_0, a_1, \ldots, a_n, 0, \ldots)$ is PF. Denote by ${\bN}$ the set of all nonnegative integers. A fundamental characterization for PF sequences is given by Schoenberg et al.\cite{AESW, ASW, Kar}, which states that a sequence $(a_n)_{n\geq 0}$ is PF if and only if its generating function can be written as 

\begin{equation}\label{0}
\sum_{n\geq 0} a_n t^n=C t^ke^{\gamma t}\frac{\Pi_{j\geq 0} (1+\alpha_j t)}{\Pi_{j\geq 0} (1-\beta_j t)},
\end{equation}
where $C>0$, $k\in {\bN}$, $\alpha_j$, $\beta_j$, $\gamma \geq 0$, and $\sum_{j\geq 0}(\alpha_j+\beta_j)<\infty$. In this case, the above generating function is called a P\'olya frequency formal power series. For some relevant results, see, for example, Brenti \cite{Bre} and Pinkus \cite{Pin}. 

\begin{theorem}\label{thm:6.1} 
Let $(\hat g;\hat f_1,\hat f_2,\ldots, \hat f_\ell)$ be the compression of a multiple  Riordan array. If $\hat g$, $\hat f_i$, $i=1,2,\ldots, \ell$, are P\'olya frequency formal power series, then $(\hat g;\hat f_1,\hat f_2,\ldots, \hat f_\ell)$ is totally positive.
\end{theorem}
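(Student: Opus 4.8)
The plan is to realize each finite minor of the compression $\hat D:=(\hat g;\hat f_1,\ldots,\hat f_\ell)=(\hat d_{n,k})$ as a sum of nonnegative quantities, by factoring $\hat D$ (column block by column block) into a row‑by‑column product of totally positive building blocks: lower triangular Toeplitz matrices of P\'olya frequency formal power series (TP by \eqref{0} together with the definition of TP), ``shift'' $0/1$ matrices, and single‑column matrices. I will use three standard facts, following Karlin \cite{Kar} and Pinkus \cite{Pin}: a row‑by‑column product of lower triangular TP matrices is TP (Cauchy--Binet, each minor being a finite sum of products of minors); a submatrix of a TP matrix is TP; and --- read straight off the Schoenberg representation \eqref{0} --- both the product of two P\'olya frequency formal power series and the series $c(t)/t^{\mathrm{ord}\,c}$ extracted from a P\'olya frequency $c$ are again P\'olya frequency. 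For a series $c$ of order $\ge 0$ write $T_c=\bigl([t^{n-k}]c\bigr)_{n,k\ge0}$ for its lower triangular Toeplitz matrix, and for $s\ge0$ write $S^{s}$ for the $0/1$ matrix with $(S^{s})_{n,k}=1$ exactly when $n=k+s$; each of these is immediately TP.

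First I would read off from Theorem \ref{thm:6.2} that $\hat D$ is lower triangular and that its $k$‑th column has generating function $c_k=\hat g\prod_{j=1}^{k}\phi_j$, where $\phi_j:=\hat f_{((j-1)\bmod \ell)+1}$; in particular $c_0=\hat g$ and $c_{k+1}=c_k\phi_{k+1}$. Each $\phi_j$ lies in $\{\hat f_1,\ldots,\hat f_\ell\}$, hence is a P\'olya frequency series of order $1$, and each $c_k$, being a product of the P\'olya frequency series $\hat g,\hat f_1,\ldots,\hat f_\ell$, is itself P\'olya frequency. Now fix columns $k_1<\cdots<k_r$ and set $\chi_m:=\prod_{j=k_m+1}^{k_{m+1}}\phi_j$ for $1\le m\le r-1$, a product of $k_{m+1}-k_m\ge1$ P\'olya frequency series of order $1$, hence P\'olya frequency of order $k_{m+1}-k_m$. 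From $c_{k_i}=c_{k_1}\prod_{m=1}^{i-1}\chi_m$ one gets the factorization
\[
\hat D[\,\cdot\,;\,k_1,\ldots,k_r\,]=T_{c_{k_1}}\,P_r ,
\]
where $P_r$ denotes the matrix whose $i$‑th column ($1\le i\le r$) is the coefficient sequence of $\chi_1\chi_2\cdots\chi_{i-1}$, the empty product being $1$. Since $T_{c_{k_1}}$ is TP, by Cauchy--Binet it remains only to show that $P_r$ is TP for every $r\ge1$ and every choice of P\'olya frequency series $\chi_1,\ldots,\chi_{r-1}$.

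This I would prove by induction on $r$. For $r=1$, $P_1$ has the single column $(1,0,0,\ldots)$ and so is TP. For $r\ge2$ write $P_r=[\,e_0\mid P_r'\,]$, where $e_0=(1,0,0,\ldots)$ is the first column (the coefficient sequence of $1$) and $P_r'$ is the matrix with columns the coefficient sequences of $\chi_1,\chi_1\chi_2,\ldots,\chi_1\cdots\chi_{r-1}$; expanding any minor of $P_r$ along its first column --- whose only nonzero entry sits in row $0$ --- reduces it either to $0$ or to a minor of $P_r'$, so $P_r$ is TP as soon as $P_r'$ is. Writing $s:=\mathrm{ord}\,\chi_1$ and using $\chi_1\cdots\chi_m=t^{s}\bigl((\chi_1/t^{s})\chi_2\cdots\chi_m\bigr)$, one obtains $P_r'=S^{s}\,T_{\chi_1/t^{s}}\,P_{r-1}$, where $\chi_1/t^{s}$ is P\'olya frequency and $P_{r-1}$ is the matrix whose $i$‑th column is the coefficient sequence of $\chi_2\chi_3\cdots\chi_{i}$ (again with the empty product equal to $1$) --- of exactly the same form as $P_r$ but with one fewer column. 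By the induction hypothesis $P_{r-1}$ is TP, hence so is the product $S^{s}T_{\chi_1/t^{s}}P_{r-1}=P_r'$, and therefore $P_r$. This completes the induction, so every minor of $\hat D$ is nonnegative and $(\hat g;\hat f_1,\ldots,\hat f_\ell)$ is totally positive.

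The step I expect to be the main obstacle is the bookkeeping underlying the inductive step: verifying the identity $P_r'=S^{s}T_{\chi_1/t^{s}}P_{r-1}$ on the nose, confirming that the three kinds of auxiliary factors ($e_0$‑bordering, the shift matrices $S^{s}$, and Toeplitz matrices of P\'olya frequency series) are indeed TP, and justifying the Cauchy--Binet expansion for these infinite but lower triangular (column‑finite) matrices. By comparison the analytic content is light: closure of P\'olya frequency formal power series under multiplication and under division by the leading power of $t$ is visible directly in the Schoenberg product form \eqref{0}, so nothing beyond quoting that formula is required.
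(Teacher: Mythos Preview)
The paper does not supply a proof of Theorem~\ref{thm:6.1}; the statement is followed immediately by the bibliography, so there is no argument in the paper against which to compare your approach.

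Your argument is correct. The column description in Theorem~\ref{thm:6.2} gives precisely $c_k=\hat g\prod_{j=1}^{k}\phi_j$ with $\phi_j$ cycling through $\hat f_1,\ldots,\hat f_\ell$, so the factorization $\hat D[\,\cdot\,;k_1,\ldots,k_r]=T_{c_{k_1}}P_r$ holds on the nose, and the inductive peeling $P_r\to P_r'\to S^{s}T_{\chi_1/t^{s}}P_{r-1}$ reduces everything to Toeplitz matrices of PF series, whose total positivity is exactly the defining property via~\eqref{0}. The points you flag as potential obstacles are all routine: Cauchy--Binet is legitimate because every left factor in each of your products is lower triangular (hence row-finite), making each minor of a product a \emph{finite} sum of products of minors of the factors; the cofactor expansion along the $e_0$-column always carries sign $+1$, since the unique nonzero entry of that column, when it lies in the chosen row set at all, sits in position $(1,1)$ of the submatrix; and closure of PF series under multiplication and under division by the leading power of $t$ is read off directly from the product form~\eqref{0}. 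One cosmetic remark: you describe each $\phi_j$ as ``of order~$1$,'' which would fail if some leading coefficient $f_{i,0}$ vanished, but your proof never actually uses this---only $\mathrm{ord}\,\phi_j\ge 1$ is needed, and that is guaranteed by~\eqref{6.1-3}.
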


\medbreak


\begin{thebibliography}{99}

\bibitem{AESW}
M. Aissen, A. Edrei, I.J. Schoenberg and A.M. Whitney, On the generating functions of totally positive sequences, {\it Proc. Nat. Acad. Sci. U.S.A.} 37 (1951), 303--307.

\bibitem{ASW}  
M. Aissen, I.J. Schoenberg and A.M. Whitney, On the generating functions of totally positive sequences I, {\it J. Analyse Math.} 2 (1952), 93--103.

\bibitem{Bar24}
P. Barry, The triple Riordan group, arXiv:2412.05461v1[math.CO] 6Dec 2024.

\bibitem{BDFJT}
D. Branch, D. Davenport, S. Frankson, J. Jones, and G. Thorpe, A and Z Sequences for Double Riordan Arrays, Springer Proceedings in Mathematics and Statistics 388 (2022), 33--46. 

\bibitem{Bre}
F. Brenti, Combinatorics and total positivity, {\it J. Combin. Theory Ser. A}, 71 (1995) 175--218.

\bibitem{DFSW} 
D.E. Davenport, S.K. Frankson, L.W. Shapiro, L.C. Woodson, An Invitation 
to the Riordan Group, {\it Enumerative Combinatorics and Applications}, ECA 4:3 (2024), Article \# S2S1.

\bibitem{DSW} 
D. E. Davenport, L. W. Shapiro, and L. C., Woodson,  The double Riordan group, {\it Electronic J. Combin.} 18(2) (2012), P33.

\bibitem{DSW12}
D. E. Davenport, L. W. Shapiro, and L. C., Woodson, The double Riordan group, {\it Electronic J. Combin.} 18(2) (2012), P33. 

 \bibitem{Gou78}
H. W. Gould, Euler's formula for $n$th differences of powers, \emph{Amer. Math. Monthly} 85 (1978), 450--467.

\bibitem{GKP}
R. L. Graham, D. E. Knuth, and O. Partashnik, {\it Concrete Mathematics}, Addison-Wesley Publishing Company, Reading, MA, 1994.
 
\bibitem{Grunert}
J. A. Grunert, \"Uber die Summirung der Reihen von der Form $A\varphi(0)$, $A_1\varphi(1)x$, $A_2\varphi(2)x^2$,\ldots,$A_n\varphi(n)x^n$,\ldots, wo $A$ eine beliebige constante Gr\"osse, $A_n$ eine beliebige und $\varphi(n)$ eine ganze rationale algebraische Function der positiven ganzen Zahl $n$ bezeichnet, \emph{J. Reine Angew. Math.} 25 (1843), 240--279.

\bibitem{He18} 
T.-X. He, Sequence characterizations of double Riordan arrays and their compressions, {\it Linear Algebra Appl.} 549 (2018), 176--202.

\bibitem{He22}
T.-X. He, {\it Methods for the summation of series}, Discrete Math. Appl. CRC Press, Boca Raton, FL, 2022, xxi+435 pp.

\bibitem{He24}
T.-X. He, The double almost-Riordan group, submission, 2024.

\bibitem{He25}
T.-X. He, The double almost-Riordan group, {\it Linear Algebra Appl.} 705 (2025), 50--88.

\bibitem{HM} 
T.-X. He and Emanuele Munarini, Symbolic methods applied to a class of identities involving Appell polynomials and Stirling numbers.

\bibitem{HS24-2}
T.-X. He and L. Shapiro, Sequence characterization of improper Riordan arrays and its application in bogus-involutions and pseudo-involution, manuscript, 2024.

\bibitem{HS}
T.-X. He and R. Sprugnoli. Sequence Characterization of Riordan Arrays, {\em Discrete Math.}, 309 (2009), 3962-3974.

\bibitem{Kar}
S. Karlin, {\it Total Positivity, Vol.1}, Stanford University Press, 1968.

\bibitem{MRSV}
D. Merlini, D. G. Rogers, R. Sprugnoli, and M. C. Verri, On some alternative characterizations of Riordan arrays, {\em Canadian J. Math.}, 49 (1997), 301--320.

\bibitem{Pin}
A. Pinkus, {\it Totally Positive Matrices}, Cambridge University Press, Cambridge, 2010.

\bibitem{Rog}
D. G. Rogers, Pascal triangles, Catalan numbers and renewal arrays, {\em Discrete Math.}, 22 (1978), 301--310.

\bibitem{Sha1}
L. W. Shapiro, Some open questions about random walks, involutions, limiting distributions and
generating functions, {\em Advances in Applied Math.}, 27 (2001), 585--596.

\bibitem{SGWW}
L. V. Shapiro, S. Getu, W. J. Woan and L. Woodson,  The Riordan group, {\em Discrete Appl. Math.} 34(1991) 229--239.

\bibitem{SS}
L. V. Shapiro and M. Song, A Riordan group poset, {\it Linear Algebra Appl.} 689 (2024), 60-85. 

\bibitem{SS23} 
C. Sun and Y. Sun, On the halves of double and 3-dimensional Riordan arrays, {\it Linear Algebra Appl.} 679 (2023), 194--219. 

\bibitem{ZZ}
L. Zhang and X. Zhao, $q$-double Riordan matrices, {\it Linear Algebra Appl.} 603 (2020), 212--225. 

\end{thebibliography}
\end{document}